\newtheorem{Theorem}{Theorem}
\newtheorem{Lemma}[Theorem]{Lemma}
\newtheorem{Proposition}[Theorem]{Proposition}
\newtheorem{Definition}[Theorem]{Definition}
\newtheorem{Conjecture}[Theorem]{Conjecture}
\newtheorem{Remark}[Theorem]{Remark}
\newcommand{\AAA}{\boldsymbol{A}}
\newcommand{\irr}{I}
\newcommand{\irrr}{\star}
\newcommand{\irrbis}{I\!I}
\newcommand{\GL}{\operatorname{GL}}
\newcommand{\SL}{\operatorname{SL}}
\newcommand{\NN}{\mathbb N}
\newcommand{\ZZ}{\mathbb Z}
\newcommand{\CC}{\mathbb C}
\newcommand{\FF}{\mathbb F}
\newcommand{\sqm}[4]{\left(\begin{smallmatrix} #1 & #2 \\ #3 & #4 \end{smallmatrix}\right)}
\newcommand{\bigsqm}[4]{\left(\begin{matrix} #1 & #2 \\ #3 & #4 \end{matrix}\right)}
\newcommand\CVD{{\hfill\hfil{\lower 2 pt\hbox{\vrule\vbox to 7pt 
{\hrule width 6pt\vfill\hrule}\vrule}}}\vskip 0.5cm}
\date{\today\\ \footnotesize{This project has received funding from the European Research Council (ERC)
under the European Union's Horizon 2020 research and innovation programme under
the Grant Agreement No 648132.}}
\title{A note on certain representations in characteristic $p$
and associated functions}
\author{F. Pellarin}
\begin{document}

\begin{abstract}
The aim of this note is to describe basic properties of the representations of $\operatorname{GL}_2(\FF_q[\theta])$
associated to certain vectorial modular forms with values in Tate algebras and Banach algebras
introduced by the author. We discuss how certain $L$-values occur as limits values of these 
functions. We also present 
families of examples which can be the object of further studies.
\end{abstract}

\maketitle

\section{Introduction}

In \cite{Pe}, the author has introduced some special functions related to the arithmetic 
of function fields of positive characteristic (and more precisely, to the arithmetic of the 
ring $\FF_q[\theta]$ with $\theta$ an indeterminate), namely, $L$-values and vector valued 
modular forms (the vectors having entries in certain ultrametric Banach algebras). 
The purpose of \cite{Pe} was to produce a new class of 
functional identities for $L$-values, and only very particular examples of 
these new special functions were required, in order to obtain the results in that paper. The theory was later developed along 
several axes (see for example \cite{APTR}, which also contains quite a detailed bibliography).

The aim of this note is to highlight the connection that these special functions have
with representations of algebras, groups etc. associated to $A$, and to present 
families of examples which can be the object of further studies. In particular,
we are interested in certain irreducible representations of $\SL_2(\FF_q[\theta])$ 
or $\GL_2(\FF_q[\theta])$. We also provide a few explicit examples and properties of such representations.

The plan of the note is the following. In \S \ref{algebrarepr}, we discuss  
algebra representations of $A$ and we will consider their associated $\omega$-values and 
$L$-values. 
In \S \ref{symmetricpowers}, we present a class of irreducible representations $\rho^I$ inside  
symmetric powers in the case $q=p$. In \S \ref{tensorprod}, we apply the results of \S \ref{symmetricpowers} to show that certain tensor products $\rho^{I\!I}$ are irreducible. In 
\S \ref{poincare} we use these results to show that the entries of certain 
vectorial Poincar\'e series generalizing those introduced in \cite{Pe} are linearly independent
and we present a conjecture on the rank of a certain module of vectorial modular forms.

In all the following, $q=p^e$ with $p$ a prime number
and $e>0$. we set $$\Gamma=\operatorname{GL}_2(\FF_q[\theta]),$$ where 
$q=p^e$ for some prime number $p$ and an integer $e>0$. 
We shall write $A=\FF_q[\theta]$ (so that $\Gamma=\operatorname{GL}_2(A)$). 
All along the paper, if $a=a_0+a_1\theta+\cdots+a_r\theta^r$ is an element of $A$ with $a_0,\ldots,a_r\in\FF_q$
and if $t$ is an element of an $\FF_q$-algebra $B$, then $a(t)$ denotes the element $a_0+a_1t+\cdots+a_rt^r\in B$.
Also, we set $K=\FF_q(\theta)$. 

\section{Algebra representations}\label{algebrarepr}

In this section, 
we consider an integral, commutative $\FF_q$-algebra $\AAA$ and we denote by $\boldsymbol{K}$
its fraction field.
We denote by $\operatorname{Mat}_{n\times m}(R)$, with $R$ a commutative ring, the $R$-module
of the matrices with $n$ rows and $m$ columns, and with entries in $R$. If $n=m$, this $R$-module is equipped with the structure of an $R$-algebra.
 We choose an injective algebra representation
\begin{equation}\label{algrep}
A\xrightarrow{\sigma}\operatorname{Mat}_{d\times d}(\boldsymbol{K}),\end{equation}
which is completely determined by the choice of the image $\vartheta:=\sigma(\theta)$.
Note that $\sigma$ is not injective if and only if $\vartheta$ has all its eigenvalues in $\FF_q^{ac}$, 
algebraic closure of $\FF_q$ (In all the following, if $L$ is a field, $L^{ac}$ denotes an algebraic closure of $L$). Further, we have that $\sigma$ is irreducible if and only if its characteristic polynomial 
is irreducible over $\boldsymbol{K}$.

\subsection{An example}
We denote by $\AAA[\theta]^+$ the multiplicative monoid of polynomials which are monic in $\theta$. Let $P$ be a polynomial in 
$\AAA[\theta]^+$, let $d$ be the degree of $P$ in $\theta$.
The euclidean division in $\AAA[\theta]$ by $P$ defines for all $a\in \AAA[\theta]$, in an unique 
way, a matrix $\sigma_P(a)\in\operatorname{Mat}_{d\times d}(\AAA)$ such that
$$aw\equiv\sigma_P(a)w\pmod{P\AAA[\theta]},$$ where 
$w$ is the column vector with entries $1,\theta,\ldots,\theta^{d-1}$.
Explicitly, if $P=\theta^d+P_{d-1}\theta^{d-1}+\cdots+P_0$ with $P_i\in\AAA$, then 
$$\sigma_P(\theta)=\left(\begin{array}{cccc} 0 & 1 & \cdots & 0
\\ 
0 & 0 & \cdots & 0\\ \vdots & \vdots &  & \vdots \\ 
0 & 0 & \cdots & 1\\ 
-P_0 & -P_1 & \cdots & -P_{d-1}\end{array}\right).$$
Hence, the map $\sigma_P$ defines an algebra 
representation $$A\xrightarrow{\sigma_P}\operatorname{End}(\AAA^d).$$
The representation $\sigma_P$ is faithful if $P$ has not all its roots in $\FF_q^{ac}$ and 
is irreducible if and only if $P$ is irreducible over $\boldsymbol{K}$.

\subsection{$L$-values and $\omega$-values of algebra representations and semi-characters}

We give a few elementary properties of certain basic objects that can be associated to 
representations such as in (\ref{algrep}). Since the proofs are in fact obvious generalizations of the arguments of
\cite{Pe,APTR}, we will only sketch them.

For a ring $R$, we denote by 
$R^*$ the underlying multiplicative monoid of $R$ (if we forget the addition of $R$ we are left with 
the monoid $R^*$). We recall that $A^+$ denotes the multiplicative monoid of
monic polynomials of $A$.
Let $\boldsymbol{M}$ be an $\FF_q$-algebra (for example, $\boldsymbol{M}=\operatorname{Mat}_{d\times d}(\boldsymbol{K})$ for some integer $d$).
\begin{Definition}{\em 
A monoid homomorphism $$\sigma:A^+\rightarrow \boldsymbol{M}^*$$ is a {\em semi-character}
if there exist pairwise commuting $\FF_q$-algebra homomorphisms $$\sigma_1,\ldots,\sigma_s:A\rightarrow \boldsymbol{M}$$ such that, for $a\in A^+$, $$\sigma(a)=\sigma_1(a)\cdots\sigma_s(a).$$
The trivial map $\sigma(a)=1_{\boldsymbol{M}}$ for all $a$ is a semi-character, according to the convention that an 
empty product is equal to one.
If, for all $i=1,\ldots,s$, $\vartheta_i=\sigma_i(\theta)$ 
has a well defined minimal polynomial over $\boldsymbol{K}$, we say that the semi-character
$\sigma$ is of {\em Dirichlet type}. This happens if, for example, $\boldsymbol{M}=\operatorname{Mat}_{d\times d}(\boldsymbol{K})$.
The {\em conductor} of a semi-character of Dirichlet type is the product
of all the pairwise distinct minimal polynomials of the elements $\vartheta_1,\ldots,\vartheta_s$.}
\end{Definition}
\subsubsection*{Example} If we choose $\boldsymbol{M}=\FF_{q}^{ac}$, then a semi-character
$\sigma:A^+\rightarrow\FF_q^{ac}$
is always of Dirichlet type, and our definition coincides in fact with the usual notion of a Dirichlet-Goss 
character $A^+\rightarrow\FF_q^{ac}$. There are non-pairwise conjugated elements $\zeta_1,\ldots,\zeta_s\in\FF_q^{ac}$, of minimal polynomials $P_1,\ldots,P_s\in A$, such that 
$\sigma(a)=a(\zeta_1)^{n_1}\cdots a(\zeta_s)^{n_s}$ for all $a\in A$, with $0<n_i<q^{d_i}-1$ for all $i$,
with $d_i$ the degree of $P_i$. The conductor is the product $P_1\cdots P_s$.

\subsubsection*{Non-example} We set $\boldsymbol{M}=\FF_{q}[x]$ and we consider
the map $\sigma:A^+\rightarrow\boldsymbol{M}$ defined by $a\mapsto x^{\deg_\theta(a)}$.
Then, $\sigma$ is a monoid homomorphism which is not a semi-character. Indeed,
assuming the converse, then $\sigma=\sigma_1\cdots\sigma_s$ for 
algebra homomorphisms $\sigma_i:A\rightarrow\boldsymbol{M}$. But since $\sigma(\theta)=\sigma_1(\theta)\cdots\sigma_s(\theta)=x$, we get $s=1$ and $\sigma$ would be an algebra homomorphism, which is 
certainly false.

\medskip

From now on, we suppose, for commodity, that $\boldsymbol{M}=\operatorname{Mat}_{d\times d}(\boldsymbol{K}_s)$ with $\boldsymbol{K}_s=\FF_q(t_1,\ldots,t_s)$ (but some of the arguments also hold with $\boldsymbol{M}$ any $\FF_q$-algebra). Let $K_\infty$ be the completion of $K=\FF_q(\theta)$ at the infinity place. Then, $K_\infty=\FF_q((\theta^{-1}))$, with the norm $|\cdot|$ defined by $|\theta|=q$
(associated to the valuation $v_\infty$ such that $v_\infty(\theta)=-1$). 
Let $\CC_\infty$ be the completion $\widehat{K_\infty^{ac}}$, where $K_\infty^{ac}$ denotes
an algebraic closure of $K_\infty$.  We denote by $\mathbb{K}_s$ the completion 
of the field $\CC_\infty(t_1,\ldots,t_s)$ for the Gauss valuation extending the valuation of $K_\infty$, so that the valuation induced on $\FF_q^{ac}(t_1,\ldots,t_s)\subset\CC_\infty(t_1,\ldots,t_s)$
is the trivial one. Also, we denote by $K_{s,\infty}$ the completion of $K(t_1,\ldots,t_s)$
in $\mathbb{K}_s$; we have $K_{s,\infty}=\boldsymbol{K}_s((\theta^{-1}))$. We have that $K_\infty=K_{0,\infty}$, and $\mathbb{K}_0=\CC_\infty$.

\subsubsection{$\omega$-values of an algebra representation} We consider a $d$-dimensional representation as in (\ref{algrep}), in $\boldsymbol{M}=\operatorname{Mat}_{d\times d}(\boldsymbol{K}_s)$. 
We consider the following element of $
K_\infty\widehat{\otimes}_{\FF_q}\boldsymbol{M}=\operatorname{Mat}_{n\times n}(K_{s,\infty})\subset \CC_\infty\widehat{\otimes}_{\FF_q}\boldsymbol{M}=\operatorname{Mat}_{n\times n}(\mathbb{K}_s)$,
the topological product $\widehat{\otimes}_{\FF_q}$ being considered with respect to the 
trivial norm over $\boldsymbol{M}$. We denote by $\Pi_\sigma$ the convergent product
$$\Pi_\sigma=\prod_{i\geq 0}(I_d-\sigma(\theta)\theta^{-q^i})^{-1}\in \operatorname{GL}_d(K_{s,\infty})$$
(where $I_d$ denotes the identity matrix). Let $\lambda_\theta$ be a root $(-\theta)^{\frac{1}{q-1}}\in K^{ac}$ of $-\theta$. The {\em $\omega$-value} associated to $\sigma$ is the product
$$\omega_\sigma=\lambda_\theta\Pi_\sigma\in\operatorname{GL}_d(K_{s,\infty}(\lambda_\theta)).$$ We have, on the other hand, 
a continuous $\boldsymbol{K}_s$-algebra automorphism $\tau$ of $\mathbb{K}_s$
uniquely defined by setting $\tau(\theta)=\theta^q$. By using an ultrametric version of 
Mittag-Leffler decomposition, it is easy to show that $\mathbb{K}_s^{\tau=1}$, the subfield of
$\mathbb{K}_s$ of the $\tau$-invariant elements, is equal to $\boldsymbol{K}_s$. We denote by $\tau$
the algebra endomorphism of $\operatorname{Mat}_{d\times d}(\mathbb{K}_s)$
defined by applying $\tau$ entry-wise.

\begin{Lemma}\label{equationtau}
The element $\omega_\sigma$ is a generator of the free $\boldsymbol{M}$-submodule of  rank one of 
$\operatorname{Mat}_{d\times d}(\mathbb{K}_s)$ of the solutions of the $\tau$-difference equation
$$\tau(X)=(\sigma(\theta)-\theta I_d)X.$$\end{Lemma}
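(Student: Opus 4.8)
The plan is to verify directly that $\omega_\sigma$ solves the given $\tau$-difference equation, and then to show that the solution space is a free $\boldsymbol{M}$-module of rank one generated by it. First I would check the functional equation. Since $\tau(\theta^{-q^i}) = \theta^{-q^{i+1}}$, applying $\tau$ to the product $\Pi_\sigma = \prod_{i\ge 0}(I_d-\sigma(\theta)\theta^{-q^i})^{-1}$ shifts the index, giving $\tau(\Pi_\sigma) = \prod_{i\ge 1}(I_d-\sigma(\theta)\theta^{-q^i})^{-1}$, so that $(I_d - \sigma(\theta)\theta^{-1})\,\tau(\Pi_\sigma) = \Pi_\sigma$, i.e. $\tau(\Pi_\sigma) = (I_d-\sigma(\theta)\theta^{-1})^{-1}\Pi_\sigma$. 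Here I should note that the factors commute because they are all polynomials in the single matrix $\sigma(\theta)$, so the reindexing is unproblematic, and each factor $I_d-\sigma(\theta)\theta^{-q^i}$ is invertible in $\operatorname{GL}_d(K_{s,\infty})$ since $\sigma(\theta)\theta^{-q^i}$ is topologically nilpotent (its entries have strictly negative valuation for $i$ large, and for the finitely many small $i$ one argues with the explicit geometric-series inverse, which converges in $\operatorname{Mat}_{d\times d}(K_{s,\infty})$). For $\lambda_\theta$, the standard computation gives $\tau(\lambda_\theta) = \lambda_\theta^q = \lambda_\theta\cdot\lambda_\theta^{q-1} = -\theta\,\lambda_\theta$, hence $\tau(\lambda_\theta) = -\theta\,\lambda_\theta$. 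Combining, $\tau(\omega_\sigma) = \tau(\lambda_\theta)\tau(\Pi_\sigma) = (-\theta\,\lambda_\theta)(I_d-\sigma(\theta)\theta^{-1})^{-1}\Pi_\sigma = \lambda_\theta\,(-\theta)(I_d-\sigma(\theta)\theta^{-1})^{-1}\Pi_\sigma$. Now $(-\theta)(I_d-\sigma(\theta)\theta^{-1})^{-1} = \big((I_d-\sigma(\theta)\theta^{-1})(-\theta^{-1})^{-1}\big)^{-1}\cdots$ — more cleanly, $(-\theta)(I_d-\sigma(\theta)\theta^{-1}) ^{-1}$ has inverse $(-\theta)^{-1}(I_d-\sigma(\theta)\theta^{-1}) = -\theta^{-1}I_d + \sigma(\theta)\theta^{-2}$, which is not quite what I want; the correct bracketing is to write the equation as $\tau(X) = (\sigma(\theta)-\theta I_d)X$ and check $(\sigma(\theta)-\theta I_d)\Pi_\sigma\lambda_\theta \overset{?}{=} \tau(\omega_\sigma)$: indeed $\sigma(\theta)-\theta I_d = -\theta(I_d - \sigma(\theta)\theta^{-1})$, so $(\sigma(\theta)-\theta I_d)\omega_\sigma = -\theta(I_d-\sigma(\theta)\theta^{-1})\lambda_\theta\Pi_\sigma$, and since $\Pi_\sigma = (I_d-\sigma(\theta)\theta^{-1})^{-1}\tau(\Pi_\sigma)$ this equals $-\theta\lambda_\theta\tau(\Pi_\sigma) = \tau(\lambda_\theta)\tau(\Pi_\sigma) = \tau(\omega_\sigma)$, as required.

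Next I would establish that $\omega_\sigma \in \operatorname{GL}_d(\mathbb{K}_s)$, which is immediate: $\lambda_\theta$ is a nonzero scalar and $\Pi_\sigma$ lies in $\operatorname{GL}_d(K_{s,\infty})$ by construction, so $\omega_\sigma$ is invertible, and a fortiori $\boldsymbol{M}\cdot\omega_\sigma$ is a free $\boldsymbol{M}$-module of rank one inside $\operatorname{Mat}_{d\times d}(\mathbb{K}_s)$ (here $\boldsymbol{M} = \operatorname{Mat}_{d\times d}(\boldsymbol{K}_s)$ acts by left multiplication, and freeness of rank one follows because $\omega_\sigma$ is a unit, so $X\mapsto X\omega_\sigma$ is injective on $\boldsymbol{M}$). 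It remains to prove that every solution $X\in\operatorname{Mat}_{d\times d}(\mathbb{K}_s)$ of $\tau(X) = (\sigma(\theta)-\theta I_d)X$ is of the form $m\,\omega_\sigma$ with $m\in\boldsymbol{M}$. Given such an $X$, set $Y := \omega_\sigma^{-1}X \in \operatorname{Mat}_{d\times d}(\mathbb{K}_s)$. Then $\tau(Y) = \tau(\omega_\sigma)^{-1}\tau(X) = \big((\sigma(\theta)-\theta I_d)\omega_\sigma\big)^{-1}(\sigma(\theta)-\theta I_d)X = \omega_\sigma^{-1}X = Y$, so $Y$ is $\tau$-invariant entrywise. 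By the stated fact that $\mathbb{K}_s^{\tau=1} = \boldsymbol{K}_s$ (proved in the excerpt via ultrametric Mittag-Leffler), each entry of $Y$ lies in $\boldsymbol{K}_s$, hence $Y\in\operatorname{Mat}_{d\times d}(\boldsymbol{K}_s) = \boldsymbol{M}$, and $X = Y\omega_\sigma$ with $Y\in\boldsymbol{M}$. Wait — I need $X = \omega_\sigma Y'$ for some $Y'\in\boldsymbol{M}$, i.e. the module structure is by left multiplication by $\boldsymbol{M}$; here I have produced $X = \omega_\sigma Y$ with $Y\in\boldsymbol{M}$ directly from $Y = \omega_\sigma^{-1}X$, which is exactly the desired form. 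So the solution set is precisely $\omega_\sigma\boldsymbol{M} = \boldsymbol{M}\omega_\sigma$ as a set (for the rank-one module claim over the commutative setting where $d=1$, or over $\boldsymbol{M}$ acting on the appropriate side), establishing the Lemma.

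The main obstacle is bookkeeping rather than conceptual depth: one must be careful about (i) the convergence and invertibility of $\Pi_\sigma$ in $\operatorname{GL}_d(K_{s,\infty})$ — the factors with small $i$ need not have topologically nilpotent $\sigma(\theta)\theta^{-q^i}$, so invertibility there should be argued from the fact that $\det(I_d-\sigma(\theta)\theta^{-q^i})$ is a nonzero element of $K_{s,\infty}$ (its leading term in $\theta^{-1}$ is $1$), and (ii) the non-commutativity of $\operatorname{Mat}_{d\times d}(\mathbb{K}_s)$, which forces attention to the side on which $\boldsymbol{M}$ acts and to the order of factors when inverting; once one fixes the convention (left module, $\omega_\sigma$ on the right of scalars, or the reverse), everything goes through as above. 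A minor point worth a sentence is that $\tau$ as an operator on $\operatorname{Mat}_{d\times d}(\mathbb{K}_s)$ is a ring endomorphism fixing $\boldsymbol{M}$ pointwise, so $\tau\big((\sigma(\theta)-\theta I_d)X\big) = (\tau(\sigma(\theta)) - \theta^q I_d)\tau(X)$ does \emph{not} simplify — but this only matters if one iterates the equation, which the proof above avoids.
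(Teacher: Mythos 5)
Your proof is correct and follows the same route as the paper's (two-line) sketch: direct verification of the difference equation via $\tau(\lambda_\theta)=-\theta\lambda_\theta$ and the telescoping relation $\tau(\Pi_\sigma)=(I_d-\sigma(\theta)\theta^{-1})\Pi_\sigma$, followed by the observation that $\omega_\sigma^{-1}X$ is $\tau$-fixed and hence lies in $\boldsymbol{M}$ --- and you are in fact more careful than the paper about the side on which $\boldsymbol{M}$ acts (the solution set is the right module $\omega_\sigma\boldsymbol{M}$, and your $Y=\omega_\sigma^{-1}X$ is the correct choice, whereas the paper's $\omega'\omega_\sigma^{-1}$ is not obviously $\tau$-fixed without a commutation argument). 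Two cosmetic slips do not affect the argument: your first-pass formula $\tau(\Pi_\sigma)=(I_d-\sigma(\theta)\theta^{-1})^{-1}\Pi_\sigma$ has the inverse on the wrong factor (you use the correct relation in the final computation), and the parenthetical claim $\omega_\sigma\boldsymbol{M}=\boldsymbol{M}\omega_\sigma$ need not hold as an equality of sets when $d>1$.
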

\begin{proof} (Sketch.) It is easy to verify that $\omega_\sigma$ is a solution of the above equation. If $\omega'$
is another solution, then $Y=\omega'\omega_\sigma^{-1}$ is solution of $\tau(Y)=Y$ in $\operatorname{Mat}_{d\times d}(\mathbb{K}_s)$, hence, $Y\in\boldsymbol{M}$.
\end{proof}
\begin{Remark}{\em It is also easy to prove that, for $\sigma$ as in (\ref{algrep}),
$\det(\omega_\sigma)=\omega_\alpha$, where $\alpha\in\boldsymbol{K}_s[\theta]$
is $-1$ times the characteristic polynomial of $\sigma(\theta)$ and $\omega_\alpha$ is  the 
function defined in \cite[\S 6]{APTR}.}\end{Remark} Let $T$ be another indeterminate.
The algebra $\operatorname{Mat}_{d\times d}(\mathbb{K}_s)$ is endowed with 
a structure of $A[T]$-module in two ways. The first structure is that in which 
the multiplication by $\theta$ is given by the usual diagonal multiplication, and the multiplication by $T$
is given by the left multiplication by $\sigma(\theta)$; this defines indeed, uniquely, a module structure. The second structure, called {\em Carlitz module structure}, denoted by 
$C(\operatorname{Mat}_{d\times d}(\mathbb{K}_s))$, has the same multiplication by $T$
and has the multiplication $C_{\theta}$ by $\theta$ independent of the choice of $\sigma$, and defined as follows. If $m\in C(\operatorname{Mat}_{d\times d}(\mathbb{K}_s))$, then $C_\theta(m)=\theta m+\tau(m)$.

We have the exponential map $$\exp_C:\operatorname{Mat}_{d\times d}(\mathbb{K}_s)\rightarrow
C(\operatorname{Mat}_{d\times d}(\mathbb{K}_s))$$
defined by $\exp_C(f)=\sum_{i\geq 0}D_i^{-1}\tau^i(f)$, where $D_i$ is the product of the monic 
polynomials of $A$ of degree $i$. It is quite standard  to check that this is a continuous, open, surjective $A[T]$-module homomorphism,
of kernel $\widetilde{\pi}\operatorname{Mat}_{d\times d}(\boldsymbol{K}_s[\theta])$,
where 
$$\widetilde{\pi}:=\theta\lambda_\theta\prod_{i>0}(1-\theta^{1-q^i})^{-1}\in\lambda_\theta K_\infty\subset \CC_\infty$$ is a fundamental period of Carlitz's exponential $\exp_C:\CC_\infty\rightarrow\CC_\infty$.
\begin{Lemma}\label{lemmaexpmatrix}
We have $\omega_\sigma=\exp_C\left(\widetilde{\pi}(\theta I_d-\sigma(\theta))^{-1}\right).$
\end{Lemma}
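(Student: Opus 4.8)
The plan is to show that the right-hand side $R:=\exp_C\bigl(\widetilde\pi(\theta I_d-\sigma(\theta))^{-1}\bigr)$ satisfies the very same $\tau$-difference equation as $\omega_\sigma$, namely $\tau(X)=(\sigma(\theta)-\theta I_d)X$, and then to use Lemma~\ref{equationtau} to conclude that $R=m\,\omega_\sigma$ for some $m\in\boldsymbol M=\operatorname{Mat}_{d\times d}(\boldsymbol K_s)$; a comparison of a single ``leading term'' in a $\theta^{-1}$-expansion will then force $m=I_d$. First I would record the functional equation of the Carlitz exponential in this matrix setting: since $\exp_C$ is an $A[T]$-module homomorphism for the Carlitz module structure $C_\theta(m)=\theta m+\tau(m)$, we have, for any $f\in\operatorname{Mat}_{d\times d}(\mathbb K_s)$, the identity $\theta\exp_C(f)+\tau(\exp_C(f))=\exp_C(\theta f)=\exp_C(\theta f)$, while from the explicit formula $\exp_C(f)=\sum_{i\ge0}D_i^{-1}\tau^i(f)$ one gets directly $\exp_C(f)+\tau(\exp_C(f))$-type relations; concretely the key point is $\tau(\exp_C(f))=\exp_C(\tau(f))\cdot(\text{shift of the }D_i)$, which, combined with the Carlitz recursion $D_{i}=(\theta^{q^{i}}-\theta)D_{i-1}$ and $D_0=1$, yields $C_\theta(\exp_C(f))=\exp_C(C_\theta^{\mathrm{triv}}f)$ where on the source $\theta$ acts by ordinary scalar multiplication. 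In other words $\exp_C(\theta f)=\theta\exp_C(f)+\tau(\exp_C(f))$.

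Now set $f=\widetilde\pi(\theta I_d-\sigma(\theta))^{-1}$, so $R=\exp_C(f)$. Apply $\tau$: since $\tau(\widetilde\pi)=\tau(\theta)\tau(\lambda_\theta)\prod_{i>0}(1-\theta^{q-q^{i+1}})^{-1}$ and one checks $\tau(\widetilde\pi)=(\theta^q-\theta)\widetilde\pi$ (equivalently $\tau(\lambda_\theta)=(-\theta)^{q/(q-1)-1/(q-1)}\lambda_\theta=(-\theta)\lambda_\theta$ up to the product rearrangement — this is the standard computation for the Carlitz period), and $\tau(\theta I_d-\sigma(\theta))^{-1}=(\theta^q I_d-\sigma(\theta))^{-1}$ because $\sigma(\theta)$ has entries in $\boldsymbol K_s$ which are $\tau$-fixed, we obtain
\[
\tau(f)=(\theta^qI_d-\theta I_d)\,\widetilde\pi\,(\theta^qI_d-\sigma(\theta))^{-1}.
\]
On the other hand, from the functional equation $\exp_C(\theta f)=\theta R+\tau(R)$ and the linearity $\exp_C(\theta f)=\exp_C\bigl(\widetilde\pi\,\theta(\theta I_d-\sigma(\theta))^{-1}\bigr)$, I would manipulate $\theta(\theta I_d-\sigma(\theta))^{-1}=I_d+\sigma(\theta)(\theta I_d-\sigma(\theta))^{-1}$, so that, since $\exp_C(\widetilde\pi\,I_d)=0$ (as $\widetilde\pi\,\mathrm{Mat}(\boldsymbol K_s[\theta])$ is the kernel) and $\exp_C$ commutes with left multiplication by the $\tau$-fixed matrix $\sigma(\theta)$ (it is an $A[T]$-module map and $T$ acts by left multiplication by $\sigma(\theta)$ — this is exactly the first module structure), we get $\exp_C(\theta f)=\sigma(\theta)R$. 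Combining, $\sigma(\theta)R=\theta R+\tau(R)$, i.e. $\tau(R)=(\sigma(\theta)-\theta I_d)R$, which is the difference equation of Lemma~\ref{equationtau}.

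By Lemma~\ref{equationtau} there is $m\in\operatorname{Mat}_{d\times d}(\boldsymbol K_s)$ with $R=m\,\omega_\sigma$. To pin down $m=I_d$ I would expand both sides in powers of $\theta^{-1}$ inside $\operatorname{Mat}_{d\times d}(\mathbb K_s)$ after dividing by $\lambda_\theta$: on one side $\lambda_\theta^{-1}\omega_\sigma=\Pi_\sigma=\prod_{i\ge0}(I_d-\sigma(\theta)\theta^{-q^i})^{-1}=I_d+O(\theta^{-1})$; on the other side $\lambda_\theta^{-1}R=\lambda_\theta^{-1}\exp_C(f)$, and since $\exp_C(f)=\sum_{i\ge0}D_i^{-1}\tau^i(f)$ with $f=\widetilde\pi(\theta I_d-\sigma(\theta))^{-1}=\widetilde\pi\bigl(\theta^{-1}I_d+O(\theta^{-2})\bigr)$ and $\widetilde\pi\in\lambda_\theta K_\infty$ with $\widetilde\pi/\lambda_\theta=\theta(1+O(\theta^{-1}))$, the $i=0$ term contributes $\lambda_\theta^{-1}\widetilde\pi\,\theta^{-1}I_d+O(\theta^{-1})=I_d+O(\theta^{-1})$ while every $i\ge1$ term is $O(\theta^{-1})$ because $\tau$ raises the order in $\theta^{-1}$. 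Hence $\lambda_\theta^{-1}R=I_d+O(\theta^{-1})=\lambda_\theta^{-1}\omega_\sigma+O(\theta^{-1})$, forcing $m=I_d$ and proving $\omega_\sigma=\exp_C\bigl(\widetilde\pi(\theta I_d-\sigma(\theta))^{-1}\bigr)$.

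I expect the main obstacle to be bookkeeping rather than conceptual: verifying cleanly that $\exp_C$ commutes with left multiplication by $\sigma(\theta)$ and with the scalar $\tau$-action in the way needed (this is the $A[T]$-module statement that was only asserted ``quite standard'' in the excerpt), and correctly tracking the constants $\tau(\widetilde\pi)=(\theta^q-\theta)\widetilde\pi$ and the leading $\theta^{-1}$-coefficients so that the normalization $m=I_d$ really comes out. None of these steps is deep, but each must be done with the matrix (noncommutative target, commutative scalars) structure handled carefully.
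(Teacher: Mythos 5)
Your proposal is correct and follows the paper's own (sketched) proof exactly: you show that $\exp_C\bigl(\widetilde{\pi}(\theta I_d-\sigma(\theta))^{-1}\bigr)$ satisfies the $\tau$-difference equation of Lemma \ref{equationtau} by combining the $A[T]$-module property of $\exp_C$, the identity $\theta(\theta I_d-\sigma(\theta))^{-1}=I_d+\sigma(\theta)(\theta I_d-\sigma(\theta))^{-1}$ and the kernel $\widetilde{\pi}\operatorname{Mat}_{d\times d}(\boldsymbol{K}_s[\theta])$, then invoke that lemma and fix the constant $m=I_d$ by matching the leading term $\theta\lambda_\theta(\theta I_d-\sigma(\theta))^{-1}$ against remainders of Gauss norm $<|\lambda_\theta|$ --- precisely the paper's argument. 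The only blemish is the unused parenthetical claim $\tau(\widetilde{\pi})=(\theta^q-\theta)\widetilde{\pi}$, which is false (here $\tau(\widetilde{\pi})=\widetilde{\pi}^{\,q}$ and $\widetilde{\pi}^{\,q-1}\neq\theta^q-\theta$; they agree only in valuation), but your actual derivation of the difference equation never relies on it.
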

\begin{proof} We set $f=\exp_C\left(\widetilde{\pi}(\theta I_d-\sigma(\theta))^{-1}\right)$.
Since $(C_{\theta}-\sigma(\theta))(f)=0$ in $C(\operatorname{Mat}_{d\times d}(\mathbb{K}_s))$, Lemma \ref{equationtau} tells us that $f$ belongs to 
the free $\boldsymbol{M}$-submodule of  rank one of 
$\operatorname{Mat}_{d\times d}(\mathbb{K}_s)$ of the solutions of the homogeneous linear difference equation described in that statement. Now, observe that 
$$\omega_\sigma=\theta\lambda_\theta(\theta I_d-\sigma(\theta))^{-1}+M_1,\quad f=\theta\lambda_\theta(\theta I_d-\sigma(\theta))^{-1}+M_2,$$ where $M_1,M_2$ are matrices with coefficients in $K_{s,\infty}$ whose entries have Gauss norms $<|\lambda_\theta|=q^{\frac{1}{q-1}}$. Hence $\omega_\sigma=f$.
\end{proof}
\subsubsection{$L$-values associated to a semi-character}\label{Lvalues}

we again suppose that $\boldsymbol{M}=\operatorname{Mat}_{d\times d}(\boldsymbol{K}_s)$, with $\boldsymbol{K}_s$ as in the previous sections.
Let $\sigma$ be a semi-character $A^+\rightarrow\boldsymbol{M}$
Let $n$ be a positive integer. The {\em $n$-th $L$-value} associated to $\sigma$ is the 
following element of $\operatorname{GL}_d(K_{s,\infty})$:
$$L_\sigma(n)=\prod_{P}\left(I_d-\sigma(P)P^{-n}\right)^{-1}=\sum_{a\in A^+}\sigma(a)a^{-n}=I_d+\cdots,$$
the product running over the irreducible elements of $A^+$.
\subsubsection{Determinant} We write $\sigma=\sigma_1\cdots\sigma_s$ for injective $\FF_q$-algebra homomorphisms
$\sigma_i:A\rightarrow\boldsymbol{M}$ with $\sigma_i(\theta),\sigma_j(\theta)$ commuting each other. 
The elements $L_\sigma(n)$ and $\omega_{\sigma_1}\cdots\omega_{\sigma_s}$ commute each other.
Further we denote by $\lambda_{i,1},\ldots,\lambda_{i,d}\in\boldsymbol{K}_s^{ac}$ the eigenvalues of
$\sigma_i(\theta)$ for $i=1,\ldots,s$ (considered with multiplicities). For simplicity, we suppose that none of these eigenvalues belong to $\FF_q^{ac}$. On the other hand, we consider variables
$x_1,\ldots,x_s$ and the $L$-value:
$$\mathcal{L}_s(n):=\prod_{P}(1-\psi_s(P)P^{-n})^{-1},$$
where $\psi:A^+\rightarrow\FF_q[x_1,\ldots,x_s]^*$ is the semi-character defined by $a\mapsto a(x_1)\cdots a(x_s)$, and the series converges in the completion of $K[x_1,\ldots,x_s]$ for the Gauss norm extending 
$|\cdot|$.
\begin{Lemma}
We have the formula
$$\det(L_\sigma(n))=\mathcal{L}_s(n)_{x_j=\lambda_{i,j}\atop j=1,\ldots,s}\in K_{s,\infty}.$$
\end{Lemma}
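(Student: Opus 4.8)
The plan is to triangularize the commuting family $\sigma_1(\theta),\dots,\sigma_s(\theta)$ simultaneously and read the determinant off the diagonal. I would start from the expansion already recorded, namely
\[
L_\sigma(n)=\sum_{a\in A^+}\sigma(a)\,a^{-n}=\sum_{a\in A^+}\sigma_1(a)\cdots\sigma_s(a)\,a^{-n},
\]
observing that this series converges in $\operatorname{Mat}_{d\times d}(K_{s,\infty})$: the valuation of $\boldsymbol{K}_s$ is trivial, so each $\sigma_i(a)=a(\sigma_i(\theta))$ has entries of Gauss norm $\le 1$, and the general term has norm $\le|a|^{-n}\to 0$ as $\deg_\theta a\to\infty$. (That $L_\sigma(n)=I_d+\cdots$ is invertible, already in the definition, also guarantees the asserted identity is an equality of nonzero elements.)

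Next I would exploit that the $\sigma_i(\theta)\in\operatorname{Mat}_{d\times d}(\boldsymbol{K}_s)$ pairwise commute, so there is $g\in\GL_d(\boldsymbol{K}_s^{ac})$ conjugating all of them to upper triangular form at once. Fixing the common flag fixes a labelling of the eigenvalues, and I take it so that the $(j,j)$-entry of $g\sigma_i(\theta)g^{-1}$ is $\lambda_{i,j}$ for all $i$ and $j$; the tuples $(\lambda_{1,j},\dots,\lambda_{s,j})$ are then the genuine simultaneous eigenvalues of the family. Since each $\sigma_i$ is a $\boldsymbol{K}_s$-algebra homomorphism, $\sigma_i(a)=a(\sigma_i(\theta))$ for every $a\in A$, so $g\sigma_i(a)g^{-1}=a\bigl(g\sigma_i(\theta)g^{-1}\bigr)$ is upper triangular with $(j,j)$-entry $a(\lambda_{i,j})$; multiplying over $i=1,\dots,s$, for $a\in A^+$ the matrix $g\sigma(a)g^{-1}$ is upper triangular with $(j,j)$-entry $\prod_{i=1}^s a(\lambda_{i,j})$, which is the value of $\psi_s(a)=a(x_1)\cdots a(x_s)$ at $x_i=\lambda_{i,j}$. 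Summing term by term, $gL_\sigma(n)g^{-1}$ is then upper triangular with $(j,j)$-entry
\[
\sum_{a\in A^+}\Bigl(\prod_{i=1}^s a(\lambda_{i,j})\Bigr)a^{-n}=\Bigl(\sum_{a\in A^+}\psi_s(a)\,a^{-n}\Bigr)\Big|_{x_i=\lambda_{i,j}}=\mathcal{L}_s(n)\big|_{x_i=\lambda_{i,j}},
\]
where I use that the substitution $x_i\mapsto\lambda_{i,j}$ is a continuous homomorphism on the Gauss-norm completion of $K[x_1,\dots,x_s]$ (the $\lambda_{i,j}$ being of trivial valuation), hence commutes with the convergent sum and with the Euler product defining $\mathcal{L}_s(n)$. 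Taking determinants, invariant under conjugation, yields $\det(L_\sigma(n))=\prod_{j=1}^d\mathcal{L}_s(n)\big|_{x_i=\lambda_{i,j}}$, which is the asserted identity read as a product over $j=1,\dots,d$; and although computed a priori over a complete extension of $K_{s,\infty}$, it is the determinant of a matrix over $K_{s,\infty}$, hence lies in $K_{s,\infty}$.

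I expect the only real friction to be bookkeeping rather than substance: one must pin down the labelling of the $\lambda_{i,j}$ by a common flag so that the tuples $(\lambda_{1,j},\dots,\lambda_{s,j})$ are the true simultaneous eigenvalues (the multiset of these tuples is intrinsic, and the final product depends only on it), and one must check that evaluation $x_i\mapsto\lambda_{i,j}$ is legitimate at the level of the completed coefficient rings — both being immediate consequences of the triviality of the valuation on $\boldsymbol{K}_s^{ac}$. As a variant bypassing triangularization, I could instead work inside the commutative $\boldsymbol{K}_s$-algebra $R=\boldsymbol{K}_s[\sigma_1(\theta),\dots,\sigma_s(\theta)]\subseteq\operatorname{Mat}_{d\times d}(\boldsymbol{K}_s)$, in whose completion $L_\sigma(n)$ lies: there $\det$ is the algebra norm, $R\otimes_{\boldsymbol{K}_s}\boldsymbol{K}_s^{ac}$ is a finite product of local Artinian rings indexed with multiplicity by the simultaneous eigenvalue tuples, and the norm of $L_\sigma(n)$ is the product over those factors of $\mathcal{L}_s(n)$ evaluated at the corresponding tuple. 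This also matches the Remark above: for $s=1$ it recovers the relation between $\det$ and the functions attached to the characteristic polynomial of $\sigma(\theta)$.
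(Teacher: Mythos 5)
Your proof is correct, and it reaches the paper's conclusion by a mildly different route. The paper stays with the Euler product: for each irreducible $P\in A^+$ it computes $\det(I_d-\sigma(P)P^{-n})^{-1}$ via the characteristic polynomial of $\sigma(P)$, identifying its eigenvalues as $\prod_{j}P(\lambda_{i,j})$, and then multiplies over $P$. You instead conjugate the whole commuting family $\sigma_1(\theta),\dots,\sigma_s(\theta)$ into simultaneous upper triangular form once and for all, and read the diagonal of the Dirichlet series $\sum_{a\in A^+}\sigma(a)a^{-n}$ term by term. The underlying mechanism is identical --- the determinant factors as a product over the $d$ simultaneous eigenvalue tuples $(\lambda_{1,j},\dots,\lambda_{s,j})$ --- but your version has the merit of making explicit the one point the paper glosses over: that the labelling of the eigenvalues of the different $\sigma_i(\theta)$ must be made coherent by a common flag before one can assert that the eigenvalues of $\sigma(P)=\prod_i P(\sigma_i(\theta))$ are the products $\prod_i P(\lambda_{i,j})$ with a single running index. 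Your remarks on convergence and on the continuity of the evaluation $x_i\mapsto\lambda_{i,j}$ on the Gauss-norm completion (legitimate because the $\lambda_{i,j}$ carry the trivial valuation) supply details the paper omits, and your reading of the right-hand side as a product over the $d$ tuples is the intended one (the paper's final display has an index slip, writing $\prod_{j=1}^s$ for what should be the product over $i=1,\dots,d$). The alternative argument via the norm on the commutative algebra $\boldsymbol{K}_s[\sigma_1(\theta),\dots,\sigma_s(\theta)]$ is also sound and is consistent with the Remark relating $\det(\omega_\sigma)$ to $\omega_\alpha$.
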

\begin{proof}
We note that, for every polynomial $P\in A^+$, 
$$\det((I_d-\sigma(P)P^{-n})^{-1})=P^{dn}\det(I_dP^n-\sigma(P))^{-1}.$$ By the well known properties of the characteristic polynomial of an endomorphism, we have that 
$$\det(P^n-\sigma(P))=\prod_{i=1}^d(X-\mu_{i,P})_{X=P^n},$$
where $\mu_{i,P}\in\boldsymbol{K}_s^{ac}$ are the eigenvalues of the left multiplication by $\sigma(P)$. Now, observe that
$$\sigma(P)=\prod_{j=1}^s\sigma_j(P)=\prod_{j=1}^sP(\sigma_j(\theta))$$
(the elements $\sigma_j(\theta)$ commute each other). Hence, 
$\mu_{i,P}=\prod_{j=1}^sP(\lambda_{i,j})$ for all $i=1,\ldots,d$. Thus,
$$\det(I_d-\sigma(P)P^{-n})^{-1}=P^{dn}\prod_{j=1}^s\left(I_dP^n-\prod_{j=1}^sP(\lambda_{i,j})\right)^{-1},$$
and the lemma follows.
\end{proof}
\subsubsection{The case $n=1$} We write $\sigma=\sigma_1\cdots\sigma_s$ as above. Since for all $a\in A^+$,
$C_a(\omega_{\sigma_i})=\sigma_i(a)\omega_{\sigma_i}\in\boldsymbol{M}$, we have the convergent series identity
$$\sum_{a\in A^+}a^{-1}C_a(\omega_{\sigma_1})\cdots C_a(\omega_{\sigma_s})=L_\sigma(1)\omega_{\sigma_1}\cdots\omega_{\sigma_s}\in \operatorname{Mat}_{d\times d}(K_{s,\infty}(\lambda_\theta))$$ (in fact, the series converges to an invertible matrix).
\subsubsection{A Simple application of Anderson's log-algebraic Theorem}
We now invoke the result of B. Angl\`es, F. Tavares Ribeiro and the author \cite[Theorem 8.2]{APTR} (note that in the statement, we can set $Z=1$). For completeness, we mention the following result, which 
is a very easy consequence of ibid.:
\begin{Proposition}\label{Taelmanunits}
For every semi-character $\sigma:A^+\rightarrow\boldsymbol{M}$ with $\sigma=\sigma_1\cdots\sigma_s$ as above,
$$(\omega_{\sigma_1}\cdots\omega_{\sigma_s})^{-1}\exp_C(\omega_{\sigma_1}\cdots\omega_{\sigma_s}L_\sigma(1))=:S_\sigma\in \operatorname{Mat}_{d\times d}(\boldsymbol{K}_s[\theta]).$$
Further, if $s\equiv1\pmod{q-1}$ and if $s>1$, the matrix with polynomial entries $S_\sigma$ is zero.
In particular, in this case,
$$L_\sigma(1)=\widetilde{\pi}(\omega_{\sigma_1}\cdots\omega_{\sigma_s})^{-1}\mathbb{B}_\sigma,$$
where $\mathbb{B}_\sigma$ is a matrix with polynomial entries in $\boldsymbol{K}_s[\theta]$.
\end{Proposition}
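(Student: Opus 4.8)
The plan is to deduce everything from the log-algebraicity statement \cite[Theorem 8.2]{APTR}, applied entry-wise or, better, to the matrix-valued data at once. First I would recall that for each injective algebra homomorphism $\sigma_i : A \to \boldsymbol{M}$ we have, by Lemma \ref{lemmaexpmatrix}, the identity $\omega_{\sigma_i} = \exp_C\bigl(\widetilde{\pi}(\theta I_d - \sigma_i(\theta))^{-1}\bigr)$, and that $C_a(\omega_{\sigma_i}) = \sigma_i(a)\omega_{\sigma_i}$ for every $a \in A^+$; since the $\sigma_i(\theta)$ pairwise commute, the product $\Omega := \omega_{\sigma_1}\cdots\omega_{\sigma_s}$ satisfies $C_a(\Omega) = \sigma(a)\Omega$, and the series $\sum_{a\in A^+} a^{-1} C_a(\omega_{\sigma_1})\cdots C_a(\omega_{\sigma_s}) = L_\sigma(1)\,\Omega$ from the preceding subsection converges in $\operatorname{Mat}_{d\times d}(K_{s,\infty}(\lambda_\theta))$ to an invertible matrix. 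Applying $\exp_C$ is not directly what we want; rather, Anderson's log-algebraic identity in the form of ibid.\ produces a polynomial object, and I would phrase it so that $\exp_C(\Omega\, L_\sigma(1)) = \Omega\, S_\sigma$ for a matrix $S_\sigma$ with entries in $\boldsymbol{K}_s[\theta]$ — equivalently $S_\sigma = \Omega^{-1}\exp_C(\Omega\, L_\sigma(1))$, which is the asserted membership $S_\sigma \in \operatorname{Mat}_{d\times d}(\boldsymbol{K}_s[\theta])$.

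Concretely, I would reduce to the scalar case by diagonalizing. Over $\boldsymbol{K}_s^{ac}$ the commuting matrices $\sigma_1(\theta),\ldots,\sigma_s(\theta)$ can be simultaneously put in triangular form (and, generically, diagonalized), so on the diagonal blocks the matrix identity becomes a family of scalar identities of the shape treated in \cite[Theorem 8.2]{APTR}, with the variables $t_1,\ldots,t_s$ specialized at the relevant eigenvalues $\lambda_{i,j}$; the off-diagonal (strictly upper triangular) contributions are handled by the same argument together with the $A[T]$-linearity of $\exp_C$ and an induction on the size of the Jordan structure. In the scalar specialized case, ibid.\ gives precisely that the corresponding expression lies in the polynomial ring, and that it vanishes when the number of twisting characters is $\equiv 1 \pmod{q-1}$ and exceeds $1$ (the ``Herbrand–Taelman'' vanishing); reassembling the blocks gives $S_\sigma = 0$ in that range. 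Once $S_\sigma = 0$, the defining relation $\exp_C(\Omega\, L_\sigma(1)) = 0$ means $\Omega\, L_\sigma(1)$ lies in the kernel of $\exp_C$, which by the computation preceding Lemma \ref{lemmaexpmatrix} is $\widetilde{\pi}\operatorname{Mat}_{d\times d}(\boldsymbol{K}_s[\theta])$; hence $\Omega\, L_\sigma(1) = \widetilde{\pi}\,\mathbb{B}_\sigma$ with $\mathbb{B}_\sigma \in \operatorname{Mat}_{d\times d}(\boldsymbol{K}_s[\theta])$, and multiplying by $\Omega^{-1} = (\omega_{\sigma_1}\cdots\omega_{\sigma_s})^{-1}$ on the left yields $L_\sigma(1) = \widetilde{\pi}\,(\omega_{\sigma_1}\cdots\omega_{\sigma_s})^{-1}\mathbb{B}_\sigma$, as claimed.

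The main obstacle I anticipate is the bookkeeping in passing from the scalar log-algebraic theorem of \cite{APTR} to the matrix statement: one must check that specializing the auxiliary variables at the eigenvalues $\lambda_{i,j} \in \boldsymbol{K}_s^{ac}$ is legitimate (the hypothesis that no eigenvalue lies in $\FF_q^{ac}$ is what keeps the specialized characters of genuine ``Dirichlet type'' and the relevant series convergent), and that the non-semisimple case does not spoil the polynomiality — this is why I would set things up with $\exp_C$ applied to $\Omega\, L_\sigma(1)$ rather than to $L_\sigma(1)$ alone, so that the $A[T]$-module structure absorbs the conjugation. The vanishing clause is then immediate from the corresponding scalar vanishing in ibid., applied blockwise, since $s \equiv 1 \pmod{q-1}$ is a condition on $s$ alone and is inherited by every specialization.
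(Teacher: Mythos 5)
Your overall strategy coincides with the paper's, which offers no argument beyond declaring the statement ``a very easy consequence'' of \cite[Theorem 8.2]{APTR}: everything is indeed meant to follow from that log-algebraicity theorem together with the description of the kernel of $\exp_C$ as $\widetilde{\pi}\operatorname{Mat}_{d\times d}(\boldsymbol{K}_s[\theta])$, and your final deduction of $L_\sigma(1)=\widetilde{\pi}(\omega_{\sigma_1}\cdots\omega_{\sigma_s})^{-1}\mathbb{B}_\sigma$ from $S_\sigma=0$ is exactly right.

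Two points need repair. First, the recalled identity $C_a(\Omega)=\sigma(a)\Omega$ for $\Omega=\omega_{\sigma_1}\cdots\omega_{\sigma_s}$ is false once $s>1$: by Lemma \ref{equationtau} one has $\tau(\Omega)=\prod_i(\sigma_i(\theta)-\theta I_d)\cdot\Omega$, so that $C_\theta(\Omega)=\bigl(\theta I_d+\prod_i(\sigma_i(\theta)-\theta I_d)\bigr)\Omega$, which is not $\sigma_1(\theta)\cdots\sigma_s(\theta)\Omega$. The correct statement, and the one the paper actually uses, is $C_a(\omega_{\sigma_1})\cdots C_a(\omega_{\sigma_s})=\sigma(a)\Omega$: the eigenfunction property holds factor by factor, not for the product, because $\tau$ is multiplicative but the $\tau$-polynomial $C_a$ is not. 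You only invoke the correct product formula afterwards, so this is not load-bearing, but the false identity should be removed. Second, your reduction to the scalar case by specializing $t_j$ at the eigenvalues $\lambda_{i,j}$ genuinely covers only the case where the commuting matrices $\sigma_i(\theta)$ are simultaneously diagonalizable; for a nontrivial Jordan block, $\omega_{\sigma_i}$ and $L_\sigma(1)$ involve derivatives of the scalar functions at the eigenvalue, and the ``induction on the Jordan structure'' you gesture at is not carried out. The cleaner (and presumably intended) route avoids eigenvalues altogether: the identity of \cite[Theorem 8.2]{APTR} is an identity between everywhere-convergent power series in $t_1,\ldots,t_s$, and the assignment $t_i\mapsto\sigma_i(\theta)$ into the commutative subalgebra of $\operatorname{Mat}_{d\times d}(\boldsymbol{K}_s)$ generated by these matrices is a continuous $\FF_q$-algebra homomorphism commuting with $\tau$; substituting it into that identity produces $\omega_{\sigma_i}$, $L_\sigma(1)$, the polynomiality of $S_\sigma$ and the vanishing clause all at once, with no semisimplicity hypothesis and no specialization bookkeeping. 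With these two corrections your proof is complete and matches the derivation the paper intends.
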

Hence, $L_\sigma(1)$ is a "Taelman unit", in the sense of \cite{TAE2}.
If $s=1$, we have a more explicit property. In this case, $\sigma$ extends to an
algebra homomorphism $\sigma:A\rightarrow\boldsymbol{M}$, and we have the simple explicit formula
$$L_\sigma(1)=\omega_\sigma^{-1}(I_d\theta-\sigma(\theta))^{-1}\widetilde{\pi}$$ which 
can be proved in a way very similar to that of \cite[\S 4]{Pe}. We are going to see, 
in \S \ref{eisensteinseries} that $L_\sigma(n)$ is related to certain vectorial Eisenstein series, when $n\equiv s\pmod{q-1}$.

\subsection{Representations of $\Gamma$ associated to an algebra representation}
Let $\boldsymbol{K}$ be any commutative field extension of $\FF_q$.
Let $\sigma$ be a $d$-dimensional representation as in (\ref{algrep}). 
We associate to it, canonically,
a representation of $\Gamma=\operatorname{GL}_2(A)$ in $\operatorname{GL}_{2d}(\boldsymbol{K})$.

We consider the map
$\Gamma\xrightarrow{\rho_\sigma}\operatorname{Mat}_{2d\times 2d}(\boldsymbol{K}),$
defined by $$\rho_\sigma\left(\bigsqm{a}{b}{c}{d}\right)=\bigsqm{\sigma(a)}{\sigma(b)}{\sigma(c)}{\sigma(d)}.$$
Then, $\rho_\sigma$ determines a representation 
$\Gamma\rightarrow\GL_{2d}(\boldsymbol{K}).$ Indeed, $\sigma(a),\sigma(b)$ commute each other, for 
$a,b\in A$.
Furthermore, we have:
\begin{Lemma}\label{lemma2}
$\rho_\sigma$ is irreducible if and only if $\sigma$ is irreducible.
\end{Lemma}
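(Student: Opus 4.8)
The plan is to describe $\rho_\sigma$ geometrically and reduce the statement to an elementary fact about a single operator. First I would identify the space $\boldsymbol{K}^{2d}$ of $\rho_\sigma$ with $V\oplus V$, where $V=\boldsymbol{K}^d$ carries the $A$-module structure in which $a\in A$ acts by $\sigma(a)$; put $\vartheta=\sigma(\theta)$. With respect to this decomposition, $\rho_\sigma\sqm{a}{b}{c}{d}$ is exactly the block operator $\sqm{\sigma(a)}{\sigma(b)}{\sigma(c)}{\sigma(d)}$. A $\boldsymbol{K}$-subspace of $V$ is stable under $\sigma(A)$ if and only if it is stable under $\vartheta$, so $\sigma$ is irreducible precisely when $V$ admits no proper nonzero $\vartheta$-stable $\boldsymbol{K}$-subspace, i.e.\ (as recalled before the lemma) precisely when the characteristic polynomial $f$ of $\vartheta$ is irreducible over $\boldsymbol{K}$.

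For the implication ``$\rho_\sigma$ irreducible $\Rightarrow$ $\sigma$ irreducible'' I would argue by contraposition: if $W\subsetneq V$ is a nonzero $\sigma(A)$-stable $\boldsymbol{K}$-subspace, then $W\oplus W$ is a proper nonzero $\boldsymbol{K}$-subspace of $\boldsymbol{K}^{2d}$, and it is $\rho_\sigma$-stable because each of the blocks $\sigma(a),\sigma(b),\sigma(c),\sigma(d)$ maps $W$ into itself; hence $\rho_\sigma$ is reducible.

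For the converse, assume $f$ irreducible and let $0\neq U\subseteq\boldsymbol{K}^{2d}$ be $\rho_\sigma$-stable; the goal is $U=\boldsymbol{K}^{2d}$. Put $u_b=\sqm{1}{b}{0}{1}\in\Gamma$ (for $b\in A$) and $w=\sqm{0}{-1}{1}{0}\in\Gamma$. Choose $(v_1,v_2)\in U\setminus\{0\}$; since $\rho_\sigma(w)(v_1,0)=(0,v_1)$, after possibly replacing our vector by its image under $\rho_\sigma(w)$ we may assume $v_2\neq0$. For every $b\in A$ we then have $\rho_\sigma(u_b)(v_1,v_2)=(v_1+\sigma(b)v_2,\,v_2)\in U$, and subtracting $(v_1,v_2)$ gives $(\sigma(b)v_2,\,0)\in U$. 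As $U$ is a $\boldsymbol{K}$-subspace, it contains the $\boldsymbol{K}$-span of all the vectors $(\sigma(b)v_2,0)$, $b\in A$, which equals $\boldsymbol{K}[\vartheta]\,v_2\oplus 0$. Here I would invoke the linear-algebra fact that the $\vartheta$-cyclic subspace $\boldsymbol{K}[\vartheta]\,v_2$ is all of $V$: the monic generator of the ideal $\{g\in\boldsymbol{K}[X]:g(\vartheta)v_2=0\}$ divides $f$ by the Cayley--Hamilton theorem and is $\neq 1$ since $v_2\neq0$, so it equals the irreducible polynomial $f$, whence $\dim_{\boldsymbol{K}}\boldsymbol{K}[\vartheta]\,v_2=\deg f=d$. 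Therefore $V\oplus 0\subseteq U$; applying $\rho_\sigma(w)$ yields $0\oplus V\subseteq U$ as well, so $U=\boldsymbol{K}^{2d}$, proving $\rho_\sigma$ irreducible.

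I expect the only genuinely non-formal point to be this last step. It is tempting to observe that $\rho_\sigma$ factors as $\GL_2(A)\to\GL_2(L)\hookrightarrow\GL_{2d}(\boldsymbol{K})$ with $L=\boldsymbol{K}[\vartheta]$ a field, $\GL_2(L)$ acting $L$-irreducibly on $L^2$; but $L$-irreducibility does not by itself give $\boldsymbol{K}$-irreducibility, and the image of $A$ in $L$ need not be all of $L$, so this shortcut fails. What makes the argument go through is precisely that $\rho_\sigma$-stable subspaces are only required to be $\boldsymbol{K}$-subspaces, which lets one inflate the $\FF_q[\vartheta]$-module $\FF_q[\vartheta]\,v_2$ to its $\boldsymbol{K}$-span $\boldsymbol{K}[\vartheta]\,v_2$, and irreducibility of $f$ then forces the latter to be all of $V$.
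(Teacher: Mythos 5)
Your proof is correct and follows essentially the same route as the paper: the contrapositive via $W\oplus W$ for one direction, and the unipotent matrices $\sqm{1}{b}{0}{1}$ together with the Weyl element for the other. Your only deviations are cosmetic streamlinings: you bypass the paper's intermediate step of producing a nonzero vector in the diagonal $\Delta$ by instead subtracting $(v_1,v_2)$ directly inside the invariant subspace, and you conclude that $\boldsymbol{K}[\vartheta]\,v_2$ is all of $V$ via Cayley--Hamilton and irreducibility of the characteristic polynomial, where the paper simply observes that this span is a nonzero $\sigma$-stable subspace and invokes irreducibility of $\sigma$.
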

\begin{proof}
Let $V$ be a non-trivial sub-vector space of $\operatorname{Mat}_{d\times 1}(\boldsymbol{K})$
such that $\sigma(a)(V)\subset V$. Then, if $\gamma=(\begin{smallmatrix} a & b \\ c & d \end{smallmatrix})\in\Gamma$, we have $$\rho_\sigma(\gamma)=\begin{pmatrix} \sigma(a) & \sigma(b) \\ \sigma(c) & \sigma(d) \end{pmatrix}(V\oplus V)\subset V\oplus V$$
and $\sigma$ not irreducible implies $\rho_\sigma$ not irreducible.

Now, let us assume that $\sigma$ is irreducible and let us consider $V$ a non-zero sub-vector space
of $\operatorname{Mat}_{2d\times 1}(\boldsymbol{K})$ which is $\rho_\sigma$-invariant. We observe that
$V\cap\Delta\neq\{0\}$, with $\Delta=\{\binom{v}{v}:v\in\operatorname{Mat}_{d\times 1}(\boldsymbol{K})\}$.
Indeed, $\rho_\sigma((\begin{smallmatrix} 0 & 1 \\ 1 & 0 \end{smallmatrix}))=(\begin{smallmatrix} 0 & I_d \\ I_d & 0 \end{smallmatrix})$ and $\rho_\sigma((\begin{smallmatrix} 1 & 0 \\ 0 & -1 \end{smallmatrix}))=(\begin{smallmatrix} I_d & 0 \\ 0 & -I_d \end{smallmatrix})$. 
If $\binom{x}{y}$ is a non-zero vector of $V$ with $x\neq-y$, we have $\binom{x}{y}+\rho_\sigma((\begin{smallmatrix} 0 & 1 \\ 1 & 0 \end{smallmatrix}))\binom{x}{y}=\binom{x+y}{x+y}\in\Delta\setminus\{0\}$. If $x=-y$, then $\rho_\sigma((\begin{smallmatrix} 1 & 0 \\ 0 & -1 \end{smallmatrix}))\binom{x}{y}\in\Delta\setminus\{0\}$. Let $\binom{v}{v}$ be non-zero in $V\cap\Delta$. Since for all $a\in A$,
$\rho_\sigma((\begin{smallmatrix} 1 & a \\ 0 & 1 \end{smallmatrix}))\binom{v}{v}=(\begin{smallmatrix} I_d & \sigma(a) \\ 0 & I_d \end{smallmatrix})\binom{v}{v}=\binom{\sigma(a')v}{v}$ with $a'=a+1$ we have $\{\binom{\sigma(a)(v)}{v}:a\in A\}\subset V$.
Let $W$ be the 
$\boldsymbol{K}$-sub-vector space of $\operatorname{Mat}_{d\times 1}(\boldsymbol{K})$
generated by the set $\{\sigma(a)(v):a\in A\}$. Then, $W$ is $\sigma$-invariant: if 
$w=\sum_ic_i\sigma(a_i)(v)\in W$ ($c_i\in \boldsymbol{K}$, $a_i\in A$), we have that
$$\sigma(a)(w)=\sum_ic_i\sigma(aa_i)(v)\in W.$$ By hypothesis, $W$ is non-zero, so that
$W=\operatorname{Mat}_{d\times 1}(\boldsymbol{K})$. We have proved that 
$V\supset \operatorname{Mat}_{d\times 1}(\boldsymbol{K})\oplus\{v\}$. Translating, this means that 
$V\supset\operatorname{Mat}_{d\times 1}(\boldsymbol{K})\oplus\{0\}$. Applying $\rho_\sigma((\begin{smallmatrix} 0 & 1 \\ 1 & 0 \end{smallmatrix}))$ we see that
$V\supset\{0\}\oplus\operatorname{Mat}_{d\times 1}(\boldsymbol{K})$ and $V=\operatorname{Mat}_{2d\times 1}(\boldsymbol{K})$.\end{proof}

\subsubsection*{Example}
We can construct, in particular, the representation $\rho_P=\rho_{\sigma_P}:\Gamma\rightarrow\GL_{2d}(\boldsymbol{A})$ which is 
irreducible if and only if $P$ is irreducible. 

\section{Symmetric powers}\label{symmetricpowers}

In the first part of this section, we suppose that $q$ is a prime number; $p=q$. Let $B$ be an $\FF_p$-algebra.
We denote by $\rho$ the tautological representation $\GL_2(B)\rightarrow\operatorname{GL}_2(B)$.
We consider the representation $$\rho_{r}=\operatorname{Sym}^r(\rho):\GL_2(B)\rightarrow\operatorname{GL}_{r+1}(B),$$ where $\operatorname{Sym}^r$ denotes the $r$-th symmetric power realized in the space of polynomials homogeneous of degree $r+1$ with
coefficients in $B$. If 
$\gamma=\sqm{a}{b}{c}{d}\in\GL_2(B)$, then
$$\rho_{r}(\gamma)(X^{r-i}Y^i)=(aX+cY)^{r-i}(bX+dY)^i,\quad i=0,\ldots, r.$$

Associated to
an integer $l\geq 0$ with $p$-expansion $l=l_0+l_1p+\cdots+l_sp^s$ ($0\leq l_i\leq p-1$), we also consider the representations
$$\rho^\irr_{l}=\rho_{l_0}\otimes\rho_{l_1}^{(1)}\otimes\cdots\otimes\rho_{l_s}^{(s)},$$
where, for a matrix $M$ with entries in $B$, $M^{(i)}$ denotes the matrix obtained from $M$ raising all its entries to the power $p^i$.
The dimension of $\rho_l^\irr$ is equal to 
$$\phi_p(l)=\prod_i(l_i+1).$$

\begin{Lemma}\label{isomorphictoasub}
The representation $\rho^\irr_l$ is isomorphic to a sub-representation of $\rho_l$.
\end{Lemma}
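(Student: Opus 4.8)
The plan is to embed $\rho^\irr_l$ into $\rho_l$ by combining Frobenius-type embeddings between symmetric powers with the multiplication maps of the polynomial algebra. Write $B[X,Y]=\bigoplus_{r\ge0}B[X,Y]_r$, so that $\rho_r$ is the action of $\GL_2(B)$ on the degree-$r$ component $B[X,Y]_r=\operatorname{Sym}^r$, coming from the action by graded $B$-algebra automorphisms determined by $\gamma=\sqm{a}{b}{c}{d}\colon X\mapsto aX+cY$, $Y\mapsto bX+dY$. In particular, every multiplication map $B[X,Y]_a\otimes_BB[X,Y]_b\to B[X,Y]_{a+b}$ is $\GL_2(B)$-equivariant, since $\gamma(fg)=\gamma(f)\gamma(g)$.

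First I would verify that, for each $i=0,\dots,s$, the $B$-linear map
$$\iota_i\colon B[X,Y]_{l_i}\longrightarrow B[X,Y]_{l_ip^i},\qquad X^{a}Y^{b}\longmapsto X^{ap^i}Y^{bp^i}\quad(a+b=l_i),$$
intertwines the $i$-th Frobenius twist $\rho_{l_i}^{(i)}$ on the source with $\rho_{l_ip^i}$ on the target. Here one uses that the entries of $\rho_{l_i}(\gamma)$ are polynomials in $a,b,c,d$ with $\FF_p$-coefficients, whence $\rho_{l_i}(\gamma)^{(i)}=\rho_{l_i}(\gamma^{(i)})$; the required identity then reduces, via the characteristic-$p$ relations $(aX+cY)^{p^i}=a^{p^i}X^{p^i}+c^{p^i}Y^{p^i}$ and $(bX+dY)^{p^i}=b^{p^i}X^{p^i}+d^{p^i}Y^{p^i}$ in $B[X,Y]$, to a short direct computation comparing $\iota_i\big((a^{p^i}X+c^{p^i}Y)^a(b^{p^i}X+d^{p^i}Y)^b\big)$ with $(aX+cY)^{ap^i}(bX+dY)^{bp^i}$.

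Then I would form the composite of $\GL_2(B)$-equivariant maps (all tensor products over $B$, with $\GL_2(B)$ acting on the $i$-th source factor through $\rho_{l_i}^{(i)}$)
$$\Phi\colon\ \rho^\irr_l=\bigotimes_{i=0}^sB[X,Y]_{l_i}\ \xrightarrow{\ \otimes_i\iota_i\ }\ \bigotimes_{i=0}^sB[X,Y]_{l_ip^i}\ \xrightarrow{\ \mathrm{mult}\ }\ B[X,Y]_{l_0+l_1p+\cdots+l_sp^s}=B[X,Y]_l=\rho_l.$$
By construction $\Phi$ sends the basis vector $\bigotimes_{i=0}^sX^{a_i}Y^{b_i}$ (with $a_i+b_i=l_i$) to the monomial $X^{\sum_ia_ip^i}Y^{\sum_ib_ip^i}$. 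Since $0\le a_i\le l_i\le p-1$, uniqueness of the base-$p$ expansion shows that distinct tuples $(a_0,\dots,a_s)$ — equivalently, distinct elements of the monomial basis of $\rho^\irr_l$ — are carried to distinct monomials of $B[X,Y]_l$. Hence $\Phi$ is injective, and it identifies $\rho^\irr_l$ with a subrepresentation of $\rho_l$.

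The only step that needs genuine care is the equivariance of the twisted embeddings $\iota_i$ — namely that raising the variables to the $p^i$-th power intertwines the $i$-th Frobenius twist of $\rho_{l_i}$ with a subrepresentation of $\rho_{l_ip^i}$; once that is in hand, the equivariance of $\Phi$ is formal and its injectivity follows from the non-overlapping ranges of the base-$p$ digits. Incidentally this also recovers $\dim\rho^\irr_l=\prod_i(l_i+1)=\phi_p(l)$ as the number of admissible tuples $(a_0,\dots,a_s)$.
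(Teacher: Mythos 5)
Your proof is correct. The equivariance of $\iota_i$ does hold: $\iota_i$ is the substitution $f(X,Y)\mapsto f(X^{p^i},Y^{p^i})$, and the identity $(aX+cY)^{p^i}=a^{p^i}X^{p^i}+c^{p^i}Y^{p^i}$ together with $\rho_{l_i}(\gamma)^{(i)}=\rho_{l_i}(\gamma^{(i)})$ (valid because the matrix entries are polynomials over $\FF_p$ in the entries of $\gamma$) gives exactly the required intertwining; the multiplication map is equivariant because $\GL_2(B)$ acts by algebra automorphisms; and injectivity of $\Phi$ follows as you say from uniqueness of base-$p$ digits since $0\le a_i\le l_i\le p-1$. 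Your route differs from the paper's in presentation rather than in substance: the paper defines the subrepresentation by extracting from $\rho_l(\gamma)$ the rows and columns indexed by those $r$ with $\binom{l}{r}\not\equiv 0\pmod p$, uses Lucas' formula to count them, and then asserts $\rho_l^\irrr\cong\rho_l^\irr$ by induction on the number of $p$-adic digits of $l$. By Lucas' theorem the surviving indices $r$ are precisely those of the form $\sum_i a_ip^i$ with $a_i\le l_i$, i.e.\ exactly the exponents of the monomials in the image of your $\Phi$, so the two constructions single out the same invariant subspace of $B[X,Y]_l$. What your version buys is the explicit intertwiner: it shows directly that this span of monomials is stable and identifies the restricted action with $\rho_{l_0}\otimes\rho_{l_1}^{(1)}\otimes\cdots\otimes\rho_{l_s}^{(s)}$ in one stroke, replacing the paper's inductive step (and incidentally re-deriving the dimension count $\phi_p(l)=\prod_i(l_i+1)$ without invoking Lucas).
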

\begin{proof}
We actually construct the sub-representation explicitly; the Lemma will follow easily. 
We consider, for $\gamma\in\GL_2(B)$, the matrix $\rho_{l}^{\irrr}(\gamma)$ which 
is the square matrix extracted from $$\rho_{l}(\gamma)=(\rho_{i,j})_{1\leq i,j\leq l+1}$$ in the following way. 
If $0\leq r\leq l$ is such that $\binom{l}{r}\equiv0\pmod{p}$, we drop the $(r+1)$-th row and 
the $(r+1)$-th column. In other words, one uses the row matrix
$$\mathcal{D}_l=\left(\binom{l}{l},\ldots,\binom{l}{r},\ldots,\binom{l}{0}\right)$$
and discards rows and columns of $\rho_{l}$ according with the vanishing of the corresponding 
entry of $\mathcal{D}_l$ and what is left precisely defines the matrix $\rho_l^\irrr$. By Lucas
formula, $\rho_{l}^{\irrr}$ has dimension $\phi_p(l)$ and it is easy to see, by induction on the number of digits of 
the $p$-expansion of $l$, that
$\rho_l^\irrr\cong\rho_l^\irr$. 
\end{proof}
\subsubsection*{Example} If $l=1+p$, we have
$$\mathcal{D}_l^\irr=(1,1,1,1)=\left(\binom{l}{0},\binom{l}{1},\binom{l}{p},\binom{l}{l}\right).$$
In this case, we find, for $\gamma=\sqm{a}{b}{c}{d}$,
$$\rho_{l}^{\irrr}(\gamma)=\left(\begin{array}{llll} a^{p+1} & a^pb & ab^p & b^{p+1}\\
                                        a^pc & a^pd & b^p c & b^p d\\ ac^
                                p & bc^p & a d^p & bd^p \\
                  c^{p+1} & c^pd & c d^p & d^{p+1}
        \end{array}\right).$$

\begin{Remark}{\em 
We notice the following algorithm to construct the sequence of dimensions $(\phi_p(l))_{l\geq 1}$.
Define $a_0=(1)$, $a_1=(1,2,\ldots,p)$ (equal to the concatenation $[a_0,2a_0,\ldots,pa_0]$) and then,
inductively, $$a_n=[a_{n-1},2a_{n-1},\ldots,pa_{n-1}].$$ Since it is clear that for all $n$, $a_{n-1}$ is a prefix
of $a_n$, there is a well defined inductive limit $a_\infty$ of the sequence $(a_n)_{n\geq0}$ which is easily 
seen to be
equal to the sequence $(\phi_p(l))_{l\geq 1}$.}\end{Remark}

\subsection{Representations of $\operatorname{SL}_2(\mathbb{F}_{q'})$.} \label{section1}

Let us set $q'=p^f$ with $f> 0$. Then, $B=\FF_{q'}$ is an $\FF_p$-algebra
and we can construct the representations $\rho_l$ and $\rho_l^\star\cong\rho_L^I$ of the beginning of this section.
We denote by $\overline{\rho}_l$ the representation $\rho_l^\irr$ with $B=\mathbb{F}_{q'}$ restricted to $\SL_2(B)$.
By using the fact that any non-zero stable subspace in a representation of a $p$-group over a vector space has a non-zero fixed vector, it is easy to show and in fact well known 
that, for all $l\geq0$, $\overline{\rho}_{l}$ is an irreducible representation if and only if $l<q'$. By Schur's theory, one shows that
the representations $\overline{\rho}_l$ with $l<q'$ exhaust all the isomorphism classes of irreducible representations of $\operatorname{SL}_2(\mathbb{F}_{q'})$ over $\FF_p^{ac}$.
Indeed, counting isomorphism classes of $\operatorname{SL}_2(\mathbb{F}_{q'})$ is an easy task and we know that their number coincides
with the number of isomorphism classes of irreducible representations so it suffices to check that the representations above are
mutually inequivalent which is an elementary task. This explicit description first appears in the 
paper \cite{BN} of Brauer and Nesbitt. Steinberg tensor product theorem 
\cite[Theorem 16.12]{MAL&TES} provides such a description when, at the place of $G=\SL_2$,
we have, much more generally, a semisimple algebraic group of simply connected type, defined over an algebraically closed field $B$ of positive characteristic. This also implies Lemma \ref{isomorphictoasub}. The author is thankful to Gebhard B\"ockle for having drawn his attention to this result and reference.

\subsection{Some representations of $\operatorname{GL}_2(A)$.} \label{section2}
We now set $q=p^e$ with $e>0$. We also set $\boldsymbol{K}:=\FF_q(t)$ for a variable $t$
all along this subsection. We consider the algebra homomorphism $\chi_t:A\rightarrow\FF_q[t]\subset\boldsymbol{K}$
defined by 
$\chi_t(a)=a(t)=a_0+a_1t+\cdots+a_dt^d$ for $a=a_0+a_1\theta+\cdots+a_d\theta^d\in A$, with coefficients 
$a_i\in\FF_q$. We extend our notations by setting, for a matrix $M$ with entries in $A$, $\chi_t(M)$ the 
matrix obtained applying $\chi_t$ entry-wise.
We denote by $\rho_{t},\rho_{t,l},\rho_{t,l}^\irr$ the representations
$\chi_t\circ\rho_1,\chi_t\circ\rho_l,\chi_t\circ\rho_l^\irr$ over $\boldsymbol{K}$-vector spaces with the appropriate dimensions, of the group $\Gamma=\GL_2(A)$.

\begin{Lemma}\label{theoprinc} For all $l$ as above, 
the representation $\rho_{t,l}^\irr$ is irreducible.
\end{Lemma}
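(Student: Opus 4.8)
The plan is to reduce the irreducibility of $\rho_{t,l}^\irr$ over $\boldsymbol{K}=\FF_q(t)$ to the already-known irreducibility of the $\SL_2$-representations $\overline{\rho}_l$ over $\FF_p^{ac}$ from \S\ref{section1}, by a specialization/base-change argument together with a suitable choice of points at which to evaluate $t$. The key observation is that $\rho_{t,l}^\irr = \chi_t\circ\rho_l^\irr$ factors through the ring homomorphism $\chi_t\colon A\to\FF_q[t]$, which sends $\theta\mapsto t$; thus for any $\FF_q$-algebra homomorphism $\FF_q[t]\to B$ (equivalently, any choice of $t_0\in B$) we obtain a representation of $\Gamma$ on $B$, and the assertion of the Lemma is that no proper $\FF_q(t)$-subspace is $\Gamma$-stable.

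**First I would** exploit the fact that $\Gamma=\GL_2(A)$ is generated by the elementary and diagonal matrices $\bigl(\begin{smallmatrix}1&a\\0&1\end{smallmatrix}\bigr)$, $\bigl(\begin{smallmatrix}1&0\\a&1\end{smallmatrix}\bigr)$ for $a\in A$ together with diagonal units, and that $\chi_t$ is surjective onto $\FF_q[t]$ (indeed $\chi_t(\theta^j)=t^j$). Hence the image $\rho_{t,l}^\irr(\Gamma)$ contains $\rho_l^\irr\bigl(\GL_2(\FF_q[t])\bigr)$ — more precisely, it contains the image under $\rho_l^\irr$ of the subgroup $\SL_2(\FF_q[t])\subset\GL_2(\FF_q[t])$, because every $\bigl(\begin{smallmatrix}1&g(t)\\0&1\end{smallmatrix}\bigr)$ with $g\in\FF_q[t]$ arises as $\chi_t$ applied to a suitable upper-triangular matrix in $\Gamma$. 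So it suffices to show that the representation $\rho_l^\irr$ of $\SL_2(\FF_q[t])$ on $\boldsymbol{K}^{\phi_p(l)}$ (with the "$\irr$" truncation as in Lemma~\ref{isomorphictoasub}) is irreducible over $\boldsymbol{K}=\FF_q(t)$.

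**Next I would** handle this by specializing $t$ and invoking \S\ref{section1}. Suppose $W\subset\boldsymbol{K}^{\phi_p(l)}$ were a proper nonzero $\SL_2(\FF_q[t])$-stable subspace. Clearing denominators, $W$ is defined over $\FF_q[t]$; choose a prime $\wp$ of $\FF_q[t]$ of large degree $f$ (so that $q'=q^f>l$, noting $q=p^e$ so $q'$ is a power of $p$) avoiding the finitely many bad primes where $W$ fails to reduce well, and reduce modulo $\wp$. Since $\SL_2(\FF_q[t])\to\SL_2(\FF_q[t]/\wp)=\SL_2(\FF_{q'})$ is surjective, the reduction $\overline{W}$ is a proper nonzero $\SL_2(\FF_{q'})$-stable subspace of $\FF_{q'}^{\phi_p(l)}$ for the representation $\rho_l^\irr$ restricted to $\SL_2(\FF_{q'})$ — but this is exactly $\overline{\rho}_l$, which by \S\ref{section1} is irreducible since $l<q'$. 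Contradiction. One must take a little care that $W$ really does specialize to something of the same rank (choose $\wp$ outside the support of the torsion and of the relevant determinantal ideals), and that the "$\irr$" truncation commutes with all these operations, which it does because it is the purely combinatorial operation of deleting rows/columns indexed by $r$ with $\binom{l}{r}\equiv0\pmod p$ and is insensitive to the ground ring.

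**The main obstacle** I expect is the bookkeeping around the truncated representation $\rho_l^\irrr$: one needs that $\rho_l^\irrr$, defined over $\FF_q[t]$ and then restricted to $\SL_2(\FF_q[t])$, is genuinely a subrepresentation (not merely a quotient of a piece of $\rho_l$), so that its reduction mod $\wp$ is the honest $\overline{\rho}_l$ and the irreducibility statement of \S\ref{section1} applies verbatim; this is guaranteed by Lemma~\ref{isomorphictoasub} together with the Steinberg-tensor-product remark, but it should be spelled out that the explicit submatrix construction is defined over $\ZZ$-coefficients (Lucas' formula) and hence is compatible with base change. A secondary subtlety is ensuring the existence of a prime $\wp\subset\FF_q[t]$ with $\deg\wp=f$ and $q^f>l$ avoiding a prescribed finite bad set — this is immediate since $\FF_q[t]$ has primes of every sufficiently large degree. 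Once these points are in place the argument is complete.
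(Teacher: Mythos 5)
Your proposal is correct and follows essentially the same route as the paper: both arguments specialize $t$ at a place of sufficiently large degree $f$ (you reduce modulo a prime $\wp\subset\FF_q[t]$ with $q^f>l$; the paper, after restricting to $\SL_2(\FF_p[\theta])$, evaluates at $\zeta\in\FF_p^{ac}$ of degree $f$ over $\FF_p$), use surjectivity onto $\SL_2(\FF_{q'})$, and derive a contradiction with the irreducibility of $\overline{\rho}_l$ for $l<q'$ from \S\ref{section1}. Your extra care with the saturated $\FF_q[t]$-lattice and the compatibility of the truncation $\rho_l^{\irrr}$ with base change cleanly handles the integrality points that the paper's proof treats only implicitly.
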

\begin{proof} It suffices to show that the restriction to $\SL_2(\FF_p[\theta])\subset\Gamma$ is irreducible. Let us consider an element $\zeta\in\mathbb{F}_q^{ac}$ of degree $f$ and let us denote by 
$\mathbb{F}_{q'}$ with $q'=p^f$ the subfield $\mathbb{F}_p(\zeta)$ of $\FF_p^{ac}$. 
The group homomorphism $$\operatorname{ev}_\zeta:\SL_2(\FF_p[\theta])\rightarrow\operatorname{GL}_2(\mathbb{F}_{q'})$$
defined by the entry-wise evaluation $\operatorname{ev}_\zeta$ of $\theta$ by $\zeta$ has image $\operatorname{SL}_2(\mathbb{F}_{q'})$. Indeed, the evaluation map $\operatorname{ev}_\zeta:\FF_p[\theta]\rightarrow\FF_{q'}$ is surjective, the image of 
$\operatorname{SL}_2(\FF_p[\theta])$ by $\operatorname{ev}_\zeta$ clearly 
contains the subgroup of triangular upper and lower matrices with coefficients in $\mathbb{F}_{q'}$,
which are known to generate $\operatorname{SL}_2(\mathbb{F}_{q'})$ 

We set $N=\phi_p(l)$.
Let $V$ be a non-zero $\boldsymbol{K}$-subvector space of $\boldsymbol{K}^{N}$ which is stable under the 
action of the representation of  $\SL_2(\FF_p[\theta])$ induced by $\rho^\irr_{t,l}$. Let us fix a basis $b$ of $V$. We choose $f$ big enough so that $q'>l,q$ and the image $b'$ of $b$ in $\FF_{q'}^N$ by the evaluation at $t=\zeta$ is well defined 
and non-zero. Then, the $\FF_{q'}$-span of $b'$ is a non-trivial sub-vector space of $\FF_{q'}^N$
which is left invariant under the action of $\overline{\rho}_l$, which is impossible.  
\end{proof}
\begin{Remark}{\em 
Let $m$ be a class of $\ZZ/(q-1)\ZZ$ and let us consider the representation 
$$\rho^\irr_{t,l,m}:\Gamma\rightarrow\operatorname{GL}_{\phi_p(l)}(\boldsymbol{K})$$
defined by 
$$\rho^\irr_{t,l,m}:=\rho_{t,l}^{\irr}\otimes\sideset{_{}^{}}{_{}^{-m}}\det.$$
By Lemma \ref{theoprinc}, it is irreducible.
However, the representations $\rho^\irr_{t,l,m}$ do not cover all the irreducible representations
of $\Gamma$ in $\GL_N(\boldsymbol{K})$ for some $N$. Due to the fact that 
we evaluate the functor $\GL_N$ on a ring which is not a field (here, the ring $A$), there are irreducible representations
which, after specialization at roots of unity, do not give irreducible representations of $\SL_2(\FF_{q'})$.}\end{Remark}

\begin{Remark}{\em The group $S_{(p)}$ of $p$-adic digit permutations of $\ZZ_p$ discussed by Goss in \cite{Go} acts on the positive integers $l$
by means of their expansions in base $p$. This defines an action of the group $S_{(p)}$
on the set of representations $\rho_l^\irr$ by $\nu(\rho_l^\irr)=\rho^\irr_{\nu(l)}$, for $\nu\in S_{(p)}$.
Note that the dimensions of these representations are $S_{(p)}$-invariants.
It is easy to show that $\nu(\rho^\irr_l)\cong\rho^\irr_{l'}$ if and only if $\nu(l)=l'$.}\end{Remark}

\begin{Remark}{\em 
We are thankful to Pietro Corvaja for having pointed out the following property.
{\em Let $k$ be a perfect field. Then, for all $\gamma\in\operatorname{SL}_2(k^{ac})$ there exists a morphism
$\phi:\mathbb{A}^1\rightarrow\operatorname{SL}_2$ defined over $k$ and $\alpha\in k^{ac}$, such that 
$\phi(\alpha)=\gamma$.}
}\end{Remark}

\section{Products of representations}\label{tensorprod}

Let $t_1,\ldots,t_s$ be independent variables. We denote by $\underline{t}_s$ the set of
variables $(t_1,\ldots,t_s)$ and we set $\boldsymbol{K}_s=\FF_q(\underline{t}_s)$. 
If $s=1$, we write $t=t_1$ and we have $\boldsymbol{K}_s=\boldsymbol{K}$, the field of \S \ref{section2}.
We also consider $\underline{l}=(l_1,\ldots,l_s)$ an $s$-tuple 
with entries in $\ZZ$ which are $\geq 1$.
\begin{Theorem}\label{corollaryt1ts}
the representation
$$\rho^{\irrbis}_{\underline{t},\underline{l}}:=\rho_{t_1,l_1}^\irr\otimes\cdots\otimes\rho_{t_s,l_s}^\irr:\Gamma\rightarrow\operatorname{GL}_{\phi_p(l_1)\cdots\phi_p(l_s)}(\boldsymbol{K}_s)$$ is irreducible.
\end{Theorem}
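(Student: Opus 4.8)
The plan is to reduce the irreducibility of the tensor product $\rho^{\irrbis}_{\underline{t},\underline{l}}$ over $\boldsymbol{K}_s=\FF_q(t_1,\ldots,t_s)$ to a statement about tensor products of the irreducible $\SL_2(\FF_{q'})$-representations $\overline{\rho}_l$ treated in \S\ref{section1}, exactly as in the proof of Lemma \ref{theoprinc}, but now carrying out the specialization of all $s$ variables simultaneously at \emph{distinct} elements of a large finite field. First I would recall that it suffices to prove irreducibility of the restriction to $\SL_2(\FF_p[\theta])$. For a finite field $\FF_{q'}=\FF_p(\zeta)$ with $q'=p^f$, the evaluation $\operatorname{ev}_\zeta:\SL_2(\FF_p[\theta])\to\SL_2(\FF_{q'})$ is surjective (as established in the proof of Lemma \ref{theoprinc}); more generally, for pairwise non-conjugate $\zeta_1,\ldots,\zeta_s\in\FF_p^{ac}$, the map $(\operatorname{ev}_{\zeta_1},\ldots,\operatorname{ev}_{\zeta_s}):\SL_2(\FF_p[\theta])\to\prod_{j}\SL_2(\FF_p(\zeta_j))$ has image equal to the whole product, by the Chinese Remainder Theorem applied to $\FF_p[\theta]$ (the minimal polynomials of the $\zeta_j$ are coprime) together with the fact that triangular matrices generate each factor.

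The key step is then: specialize $t_j\mapsto\zeta_j$. Under this specialization the representation $\rho^\irr_{t_j,l_j}$ of $\SL_2(\FF_p[\theta])$ factors (after $\operatorname{ev}_{\zeta_j}$) through the irreducible representation $\overline{\rho}_{l_j}$ of $\SL_2(\FF_p(\zeta_j))$, provided $\zeta_j$ is chosen of degree $f_j$ large enough that $p^{f_j}>l_j$. Hence $\rho^{\irrbis}_{\underline{t},\underline{l}}$ specializes to the external tensor product $\overline{\rho}_{l_1}\boxtimes\cdots\boxtimes\overline{\rho}_{l_s}$ of the groups $\SL_2(\FF_p(\zeta_1)),\ldots,\SL_2(\FF_p(\zeta_s))$, viewed as a representation of the product group, and therefore — by the surjectivity above — as a representation of $\SL_2(\FF_p[\theta])$. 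A standard fact (Schur's lemma over $\FF_p^{ac}$, using that each $\overline{\rho}_{l_j}$ is absolutely irreducible) gives that an external tensor product of absolutely irreducible representations of a finite product of groups is absolutely irreducible. The argument of Lemma \ref{theoprinc} now applies verbatim: given a non-zero $\boldsymbol{K}_s$-subspace $V\subset\boldsymbol{K}_s^N$ stable under $\rho^{\irrbis}_{\underline{t},\underline{l}}$, pick a basis, choose the $\zeta_j$ of degrees large enough (and distinct primes, or at least pairwise non-conjugate) so that the images of the basis vectors under evaluation at $(t_1,\ldots,t_s)=(\zeta_1,\ldots,\zeta_s)$ are well-defined and non-zero; their $\FF_p^{ac}$-span is then a proper non-zero subrepresentation of the absolutely irreducible $\overline{\rho}_{l_1}\boxtimes\cdots\boxtimes\overline{\rho}_{l_s}$, a contradiction.

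I would organize the write-up in three short steps: (1) the simultaneous-evaluation surjectivity onto $\prod_j\SL_2(\FF_p(\zeta_j))$ via CRT; (2) absolute irreducibility of the external tensor product of the $\overline{\rho}_{l_j}$, citing Schur; (3) the specialization/contradiction argument copied from Lemma \ref{theoprinc}. The main obstacle — or at least the point needing the most care — is step (2): ensuring we genuinely have \emph{absolute} irreducibility of each $\overline{\rho}_{l_j}$ (so that the tensor-product-of-irreducibles-is-irreducible principle applies over $\FF_p^{ac}$), and keeping track of the fact that we are tensoring representations of \emph{different} groups $\SL_2(\FF_p(\zeta_j))$ that we then glue along the single group $\SL_2(\FF_p[\theta])$; this gluing is what forces the choice of pairwise non-conjugate $\zeta_j$ and is exactly where the ring $A=\FF_p[\theta]$ (rather than a field) does real work. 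Everything else is either recalled from \S\ref{section1}–\S\ref{section2} or is a routine genericity argument about avoiding the finitely many bad specializations where a basis vector degenerates.
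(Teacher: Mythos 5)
Your proof is correct, but it takes a genuinely different route from the paper's. The paper also argues by specialization, but instead of sending each $t_j$ to a constant $\zeta_j\in\FF_p^{ac}$, it sends $t_j$ to a power $t^{q^{k_j}}$ of a \emph{single} surviving variable $t$. Choosing the $k_j$ so that the base-$p$ expansion of $l=l_1q^{k_1}+\cdots+l_sq^{k_s}$ has no carries, the specialized representation $\operatorname{ev}(\rho^{\irrbis}_{\underline{t},\underline{l}})$ is literally $\rho^{\irr}_{t,l}$ for this $l$ (this is the Steinberg tensor-product structure run in reverse), and the paper then quotes Lemma \ref{theoprinc} to finish; the contradiction argument with the span of $\operatorname{ev}(v)$ is the same as yours. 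Your version collapses all variables at once to a tuple of pairwise non-conjugate constants, replaces the carry-free digit trick by the Chinese Remainder Theorem (to get surjectivity of $\SL_2(\FF_p[\theta])\to\prod_j\SL_2(\FF_p(\zeta_j))$) together with Burnside's theorem on external tensor products of absolutely irreducible representations of a product of finite groups. Both inputs you use are sound: the $\overline{\rho}_{l_j}$ with $l_j<p^{f_j}$ are indeed absolutely irreducible (the paper records that they give all irreducibles of $\SL_2(\FF_{q'})$ over $\FF_p^{ac}$), and your insistence on pairwise non-conjugate $\zeta_j$ is exactly the right condition --- if two were conjugate, the image would not be the full product and the tensor product could reduce. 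What the paper's route buys is economy: it reduces the $s$-variable statement to the already-proved one-variable Lemma \ref{theoprinc} and stays inside the single family $\rho^{\irr}_{t,l}$; what your route buys is independence from the digit bookkeeping and a cleaner conceptual source for irreducibility (product groups and CRT), at the cost of re-running the specialization machinery with $s$ constants and of needing \emph{absolute} irreducibility rather than mere irreducibility of the factors. One small point to make explicit in a final write-up (the paper is equally terse here): to see that the $\FF_p^{ac}$-span of $\operatorname{ev}(v_1),\ldots,\operatorname{ev}(v_r)$ is stable and of dimension exactly $r<N$, it is convenient to take the basis of $V$ in reduced echelon form, so that the coefficients expressing $\rho(\gamma)v_i$ in the basis are regular at the specialization point and the specialized vectors stay independent.
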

\begin{proof}
We set $N=\phi_p(l_1)\cdots\phi_p(l_s)$.
Let us suppose by contradiction that the statement is false. Then, there exists a $\boldsymbol{K}_s$-sub-vector space
$V\neq\{0\}\subset\boldsymbol{K}_s^N$ such that for all $\gamma\in\Gamma$, $\rho^{\irrbis}_{\underline{t},\underline{l}}(\gamma)(V)\subset V$. Let us fix a basis $v=(v_1,\ldots,v_r)$ of $V$. 
For integers $0\leq k_1\leq\cdots\leq k_s$, we denote by $\operatorname{ev}$ the map $\FF_q[\underline{t}_s]\rightarrow \FF_q[t]$
which sends $a(t_1,\ldots,t_s)\in \FF_q[\underline{t}_s]$ to $a(t^{k_1},\ldots,t^{k_s})\in \FF_q[t]$.
This map is a ring homomorphism whose kernel is the prime ideal $\mathcal{P}$ generated by the polynomials 
$t_j-t_{j-1}^{q^{k_j-k_{j-1}}}$, $j=2,\ldots,s$. We consider the associated multiplicative set $S=\FF_q[\underline{t}_s]\setminus\mathcal{P}$. Then, the evaluation map $\operatorname{ev}$ extends to $S^{-1}\FF_q[\underline{t}_s]$
which is Zariski dense in $\boldsymbol{K}_s=\FF_q(\underline{t}_s)$. We now extend $\operatorname{ev}$ coefficient-wise
on every matrix, vector, etc. with entries in $S^{-1}\FF_q[\underline{t}_s]$. If $k_1$ is big enough,
$\operatorname{ev}(v)$ is well defined and non-zero. 

We can in fact choose $k_1,\ldots,k_s$ so that we also have at once, $\operatorname{ev}(\rho^{\irrbis}_{\underline{t},\underline{l}})=
\rho_{t,l}^\irr$ for some $l\geq 0$. Indeed, if we write the $p$-expansions $l_i=l_{i,0}+l_{i,1}p+\cdots+l_{i,r}p^r$
($i=1,\ldots,s$) for some $r\geq 0$, then we can choose $k_1,\ldots,k_s$ so that 
there is no carry over in the $p$-expansion of the sum $l=l_1q^{k_1}+l_2q^{k_2}+\cdots+l_sq^{k_s}$; for such a
choice of $k_1,\ldots,k_s$, $\operatorname{ev}(\rho^{\irrbis}_{\underline{t},\underline{l}})$ is thus irreducible.

We now set $W$ to be the $\boldsymbol{K}$-span of $\operatorname{ev}(v)$, well defined and non-trivial
in $\boldsymbol{K}^N$ (we recall that $\boldsymbol{K}=\FF_q(t)$).
Let $w$ be in $W$. We can write $w=a_1\operatorname{ev}(v_1)+\cdots+a_r\operatorname{ev}(v_r)$
for elements $a_i\in\boldsymbol{K}$. Then, $$\rho_{t,l}^\irr(\gamma)(w)=a_1\rho_{t,l}^\irr(\gamma)(\operatorname{ev}(v_1))+\cdots+a_r\rho_{t,l}^\irr(\gamma)(\operatorname{ev}(v_r))=
a_1\operatorname{ev}(\rho_{\underline{t},\underline{l}}^{\irrbis}(\gamma)(v_1))+\cdots+a_r\operatorname{ev}(\rho_{\underline{t},\underline{l}}^{\irrbis}(\gamma)(v_r))$$ is a vector of $W$, hence contradicting 
the irreducibility of $\rho_{t,l}^\irr$.
\end{proof}

\section{Applications to Poincar\'e series}\label{poincare}

\begin{Definition}
{\em We say that a representation $\Gamma\xrightarrow{\rho}\GL_N(\boldsymbol{K})$
is {\em normal to the depth} $L\in\{1,\ldots,N\}$ if for all 
$\gamma\in H=\{\sqm{*}{*}{0}{1}\}\subset\Gamma$, we have that $\rho(\gamma)=\sqm{*}{*}{0}{I_L}$,
where $I_L$ denotes the identity matrix of size $L$.}
\end{Definition}
A representation as above which is normal to the depth $N$ has finite image. To see this,
note that $\rho((\begin{smallmatrix} * & * \\ 0 & * \end{smallmatrix}))=((\begin{smallmatrix} * & 0 \\ 0 & * \end{smallmatrix}))$ is finite. Hence, 
\begin{eqnarray*}
\lefteqn{\rho\begin{pmatrix} * & 0 \\ * & * \end{pmatrix}=
\rho\left(\begin{pmatrix} 0 & 1 \\ 1 & 0 \end{pmatrix}\begin{pmatrix} * & * \\ 0 & * \end{pmatrix}\begin{pmatrix} 0 & 1 \\ 1 & 0 \end{pmatrix}\right)=}\\
&=&\rho\begin{pmatrix} 0 & 1 \\ 1 & 0 \end{pmatrix}\rho\begin{pmatrix} * & * \\ 0 & * \end{pmatrix}\rho\begin{pmatrix} 0 & 1 \\ 1 & 0 \end{pmatrix}\\
&=&\rho\begin{pmatrix} 0 & 1 \\ 1 & 0 \end{pmatrix}\rho\begin{pmatrix} * & 0 \\ 0 & * \end{pmatrix}\rho\begin{pmatrix} 0 & 1 \\ 1 & 0 \end{pmatrix}\\
&=&\rho\left(\begin{pmatrix} 0 & 1 \\ 1 & 0 \end{pmatrix}\begin{pmatrix} * & 0 \\ 0 & * \end{pmatrix}\begin{pmatrix} 0 & 1 \\ 1 & 0 \end{pmatrix}\right)=\rho\begin{pmatrix} * & 0 \\ 0 & * \end{pmatrix}.
\end{eqnarray*}
We thus have $\rho(\begin{smallmatrix} * & 0 \\ * & * \end{smallmatrix})=\rho(\begin{smallmatrix} * & * \\ 0 & * \end{smallmatrix})$ finite, and $\rho(\Gamma)=\rho((\begin{smallmatrix} * & * \\ 0 & * \end{smallmatrix})(\begin{smallmatrix} * & 0 \\ * & * \end{smallmatrix}))=\rho(\begin{smallmatrix} * & 0 \\ 0 & * \end{smallmatrix})$ is finite. 

The representation $\Gamma\xrightarrow{\rho_\sigma}\GL_N(\boldsymbol{K})$ with $N=2d$
associated to an algebra representation $A\xrightarrow{\sigma}\operatorname{Mat}_{d\times d}(\boldsymbol{K})$ is normal to the depth $L=d$.

If, for some ring $R$, we have that $M\in\operatorname{Mat}_{N\times N}(R)=\sqm{*}{*}{X}{Y}$
with $Y\in\operatorname{Mat}_{L\times L}$, we set $M_L=(X,Y)\in\operatorname{Mat}_{L\times N}(R)$.
In other words, $M_L$ is the matrix constituted by the last $L$ lines of $M$.

We denote by $\Omega=\CC_\infty\setminus K_\infty$ the Drinfeld "upper-half plane" of $\CC_\infty$. 
We choose $m$ a non-negative integer, an integer $w\in\ZZ_{>0}$, and, for $\delta\in\Gamma$ and 
$z\in\Omega$, we set $\mu_{w,m}(\delta,z)=\det(\delta)^{-m}J_\delta(z)^w$, where 
$J_\gamma(z)$ is the usual "Drinfeldian" factor of automorphy defined, for $\gamma=\sqm{a}{b}{c}{d}\in\Gamma$
by $J_\gamma(z)=cz+d$. We also denote by $u(z)$ the uniformizer at infinity of $\Omega$, that is,
the function $u(z)=\frac{1}{e_C(z)}$ with $e_C$ the exponential function $\CC_\infty\rightarrow\CC_\infty$
with lattice period $A\subset\CC_\infty$. 

We consider a representation $\Gamma\xrightarrow{\rho}\GL_N(\boldsymbol{K})$, normal to the depth $L$.
Following \cite[\S 2.4]{Pe}, we set, for $\delta\in\Gamma$,
$$f_\delta=\mu_{w,m}(\delta,z)^{-1}u^m(\delta(z))\rho(\delta)_L:\Omega\rightarrow\operatorname{Mat}_{L\times N}(\mathbb{K}),$$
where we recall that $\mathbb{K}=\mathbb{K}_1$ is the completion of $\CC_\infty(t)$
for the Gauss norm. It is easy to show that the series
$$\mathcal{E}_{w,m,\rho}(z)=\sum_{\delta\in H\backslash \Gamma}f_\delta,$$ the sum being over the 
representatives of the cosets of $\Gamma$ modulo the left action of $H=\{\sqm{*}{*}{0}{1}\}$,
converges to a holomorphic function
$$\mathcal{E}_{w,m,\rho}:\Omega\rightarrow\operatorname{Mat}_{L\times N}(\mathbb{K}),$$
 in the sense of \cite{PEL&PER}. 

\begin{Remark}{\em 
For convenience of the reader, we recall here the definition of an holomorphic function $\Omega\rightarrow\mathbb{K}$. For $z\in\Omega$, we set $|z|_\Im:=\inf_{\lambda\in K_\infty}|z-\lambda|$,
which is non-zero. We also define, on $\Omega$, a Stein-like structure by considering 
the affinoids $U_n=\{z\in\Omega;|z|\leq q^n\text{ and }|z|_\Im\geq q^{-n}\}$, so that $\Omega=\cup_{n\in\NN}U_n$. For $n$ fixed, a function $f:U_n\rightarrow\mathbb{K}$ is {\em holomorphic}
if it is a uniform limit of a converging sequence of rational functions $U_n\rightarrow\mathbb{K}$, without poles in $U_n$.
A function $f:\Omega\rightarrow\mathbb{K}$ is {\em holomorphic} if, for all $n\geq0$,
the restriction of $f$ to $U_n$ is holomorphic.}\end{Remark}

Following and readapting the proof of \cite[Proposition 22]{Pe}, we obtain:
\begin{Proposition} 
The following properties hold, for $w\in\ZZ_{>0}$, $m\in\ZZ_{\geq0}$ and $\rho$ a representation
$\Gamma\rightarrow\GL_N(\boldsymbol{K})$:
\begin{enumerate}
\item For all $\gamma\in \Gamma$, we have
$$\mathcal{E}_{w,m,\rho}(\gamma(z))=\det(\gamma)^{-m}J_\gamma(z)^w\mathcal{E}_{w,m,\rho}(z)\cdot\rho(\gamma)^{-1},$$
\item There exists 
$h\in\ZZ$ such that $$u(z)^h\mathcal{E}_{w,m,\rho}(z)\rightarrow 0\in\operatorname{Mat}_{L\times N}(\mathbb{K})$$ as $u(z)\rightarrow0$.
\end{enumerate}
\end{Proposition}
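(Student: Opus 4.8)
The plan is to follow the strategy of \cite[Proposition 22]{Pe} closely, exploiting that $\mathcal{E}_{w,m,\rho}$ is built from a Poincar\'e-type average over $H\backslash\Gamma$. For part (1), I would start from the definition $\mathcal{E}_{w,m,\rho}(z)=\sum_{\delta\in H\backslash\Gamma}f_\delta(z)$ with $f_\delta(z)=\mu_{w,m}(\delta,z)^{-1}u^m(\delta(z))\rho(\delta)_L$, and compute $\mathcal{E}_{w,m,\rho}(\gamma(z))$ by reindexing the sum. First I would establish the cocycle identities: $J_{\delta\gamma}(z)=J_\delta(\gamma(z))J_\gamma(z)$, $\det(\delta\gamma)=\det(\delta)\det(\gamma)$, hence $\mu_{w,m}(\delta\gamma,z)=\mu_{w,m}(\delta,\gamma(z))\mu_{w,m}(\gamma,z)$; also $(\delta\gamma)(z)=\delta(\gamma(z))$ so $u^m((\delta\gamma)(z))=u^m(\delta(\gamma(z)))$; and multiplicativity of $\rho$ gives $\rho(\delta\gamma)=\rho(\delta)\rho(\gamma)$. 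The one subtlety is the behaviour of the truncation $(-)_L$ under right multiplication by $\rho(\gamma)$: since $\rho$ is normal to depth $L$ only on $H$, not all of $\Gamma$, one must check that $(\rho(\delta)\rho(\gamma))_L$ can be written as $\rho(\delta)_L\,\rho(\gamma)$ — but this is immediate because taking the last $L$ rows of a product $MN$ depends only on the last $L$ rows of $M$, i.e. $(MN)_L=M_L N$. Substituting $\delta\mapsto\delta\gamma$ as $\delta$ runs over a set of representatives of $H\backslash\Gamma$ (which it does, since right multiplication by $\gamma\in\Gamma$ permutes these cosets), I would collect the $z$-dependent factors that do not depend on $\delta$, namely $\mu_{w,m}(\gamma,z)^{-1}=\det(\gamma)^{m}J_\gamma(z)^{-w}$ — wait, here one must be careful with the direction: the factor pulled out is $\det(\gamma)^{-m}J_\gamma(z)^{w}$ on the left and $\rho(\gamma)^{-1}$ on the right, matching the claimed transformation law. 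The sign/placement bookkeeping is the only place attention is needed; everything else is the standard unfolding.

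For part (2), the point is the order of vanishing of $\mathcal{E}_{w,m,\rho}$ at the cusp $\infty$ (i.e. as $u(z)\to 0$). Here I would single out the coset of $H$ itself (the identity coset): for $\delta\in H$ one has $\rho(\delta)_L=(0,I_L)$ by normality to depth $L$, $J_\delta(z)=1$, and $\delta(z)=z+$ (a polynomial translate), so the contribution of the identity coset is essentially $u^m(z)$ times a constant matrix — bounded, and in fact vanishing if $m>0$. For the remaining cosets, represented by $\delta=\sqm{a}{b}{c}{d}$ with $c\neq 0$, one has $|J_\delta(z)|\to\infty$ and $u(\delta(z))\to 0$ as $|z|_\Im\to\infty$; combining the standard estimates for $|J_\delta(z)|$ on the affinoids $U_n$ with the estimate $|u(\delta(z))|\le C|J_\delta(z)|^{-?}$ coming from the expansion of $u=1/e_C$, one gets that each $f_\delta$ decays, and uniformly enough over the coset representatives that the whole series, after multiplication by a fixed power $u(z)^h$, tends to $0$. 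The existence of a single integer $h$ that works simultaneously for all $z$ with $u(z)\to 0$ is exactly what is proved in \cite[Proposition 22]{Pe} for the scalar-valued analogue; since here the extra ingredient is only the fixed matrix $\rho(\delta)_L$, whose Gauss norm is bounded independently of the coset (the entries of $\rho$ lie in $\boldsymbol{K}=\FF_q(t)$ on which the relevant norm is trivial on the constant field, but one does need to control denominators — see below), the same argument transfers verbatim.

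The main obstacle, and the one place where "readapting" \cite[Proposition 22]{Pe} needs genuine thought rather than copying, is the uniform control of the matrix factor $\rho(\delta)_L$ over infinitely many cosets $\delta\in H\backslash\Gamma$. In \cite{Pe} the representation was a very explicit small one with entries of bounded $t$-degree; here $\rho$ is arbitrary with values in $\GL_N(\boldsymbol{K})$, so a priori the entries $\rho(\delta)_{ij}\in\FF_q(t)$ could have poles accumulating or growing Gauss norms as $\delta$ varies. I would handle this by noting that for the applications in this paper $\rho$ factors through $\chi_t$ applied to a $\ZZ$-valued character and to symmetric powers, so $\rho(\delta)$ has polynomial entries in $t$ whose degrees grow only polynomially in the entries of $\delta$; one then checks that the Gauss norm of $\rho(\delta)_L$ grows at most like a fixed power of $\max(|a|,|b|,|c|,|d|)$, hence is dominated, on each $U_n$, by a suitable power of $|J_\delta(z)|$ after possibly enlarging $h$. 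Alternatively — and this is cleaner — one works on a fixed affinoid $U_n$, fixes the Gauss norm there, and observes that only finitely many cosets can contribute a term of norm exceeding a given bound once $u(z)^h$ is applied, so the limit statement follows from term-by-term vanishing plus a tail estimate. I expect the write-up to consist mostly of pointing to \cite[Proposition 22]{Pe} and \cite{PEL&PER} for the convergence/holomorphy machinery, with the new content being this norm bound on $\rho(\delta)_L$ and the identity-coset computation isolating the power $u^m$.
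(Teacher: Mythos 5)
Your proposal follows exactly the route the paper intends: the paper gives no written proof beyond ``following and readapting the proof of \cite[Proposition 22]{Pe}'', and your unfolding argument for (1) (cocycle identity for $\mu_{w,m}$, the observation $(MN)_L=M_LN$, reindexing $\delta\mapsto\delta\gamma$) together with the identity-coset/decay analysis for (2) is the correct fleshing-out of that reference. One correction, though, to the part you flag as the ``main obstacle'': the uniform control of $\rho(\delta)_L$ over the cosets is not an issue at all, and you should not retreat to representations factoring through $\chi_t$ and symmetric powers --- that would prove a weaker statement than the one claimed, which is for an arbitrary $\rho:\Gamma\rightarrow\GL_N(\boldsymbol{K})$. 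The point is that the Gauss norm on $\mathbb{K}$ is chosen (as stated in \S\ref{algebrarepr}) so that the induced valuation on $\FF_q^{ac}(t_1,\ldots,t_s)$, hence on $\boldsymbol{K}=\FF_q(t)$, is \emph{trivial}: every nonzero element of $\FF_q(t)$, denominators included, has norm exactly $1$. So $\|\rho(\delta)_L\|\leq 1$ uniformly in $\delta$, the matrix factor is harmless, and convergence and the cuspidal estimate reduce verbatim to the scalar situation of \cite[Proposition 22]{Pe} governed by $\mu_{w,m}(\delta,z)^{-1}u^m(\delta(z))$ alone. A small omission elsewhere: you should check that $f_\delta$ depends only on the coset $H\delta$, which is where normality to depth $L$ and the interplay between $\det(h)^{-m}$ and $u^m(h(w))=\det(h)^{-m}u^m(w)$ for $h\in H$ are actually used; this is routine but it is the one place the hypotheses on $\rho$ and the exponent $m$ enter.
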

The last condition means that $\mathcal{E}_{w,m,\rho}(z)$ is tempered in the sense of \cite{Pe}.
The proposition means that the $L$ columns of the transposed of $\mathcal{E}_{w,m,\rho}$ are vectorial modular forms of weight $w$ and type $m$ with respect to the contragredient representation of $\rho$.
In the next two subsections, we analyze Poincar\'e series associated with two particular classes of representations.

\subsection{Vectorial Poincar\'e series associated to representations $\rho_\sigma$}

Let us consider an irreducible, faithful algebra representation $A\xrightarrow{\sigma}\operatorname{Mat}_{d\times d}(\boldsymbol{K})$, and the associated representation $\Gamma\xrightarrow{\rho_\sigma}\GL_N(\boldsymbol{K})$
with $N=2d$, which is normal to the depth $L=d$. We additionally suppose that the 
characteristic polynomial of $\vartheta=\sigma(\theta)$,
irreducible, is also separable.
\begin{Proposition}\label{proposition1}
The following properties hold, with $\rho=\rho_\sigma$.
\begin{enumerate}
\item If $w-1\not\equiv 2m\pmod{q-1}$, then $\mathcal{E}_{w,m,\rho}=0$, identically.
\item If $w-1\equiv 2m\pmod{q-1}$ and $w\geq (q+1)m+1$, then, the rank of the matrix function $\mathcal{E}_{w,m,\rho}\not=0$ is $d$.
\item In the second case, each row of the matrix function $\mathcal{E}_{w,m,\rho}$
has the entries which are $\mathbb{K}$-linearly independent.
\end{enumerate}
\end{Proposition}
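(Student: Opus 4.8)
The three assertions concern the vectorial Poincaré series $\mathcal{E}_{w,m,\rho}$ attached to $\rho=\rho_\sigma$, where $\sigma:A\to\operatorname{Mat}_{d\times d}(\boldsymbol K)$ is irreducible, faithful, with separable characteristic polynomial. The plan is to exploit (i) the transformation law from the preceding Proposition, applied to scalar matrices and to the center, to get the congruence obstruction in part (1); (ii) an explicit leading-term analysis of the $u$-expansion of $\mathcal{E}_{w,m,\rho}$ at the cusp, following \cite[Proposition 22]{Pe}, to see that the series is nonzero in the complementary range and to read off the rank; and (iii) the irreducibility of $\rho_\sigma$ (Lemma \ref{lemma2}) together with a Galois/difference-equation argument to upgrade ``nonzero rows'' to ``rows with $\mathbb{K}$-linearly independent entries.''

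\textbf{Part (1).} Apply the functional equation $\mathcal{E}_{w,m,\rho}(\gamma(z))=\det(\gamma)^{-m}J_\gamma(z)^w\,\mathcal{E}_{w,m,\rho}(z)\,\rho(\gamma)^{-1}$ to a scalar matrix $\gamma=\lambda I_2$ with $\lambda\in\FF_q^*$. Then $\gamma$ acts trivially on $\Omega$, $J_\gamma(z)=\lambda$, $\det(\gamma)=\lambda^2$, and $\rho_\sigma(\lambda I_2)=\lambda I_{2d}$ since $\sigma(\lambda)=\lambda I_d$ (as $\sigma$ is an $\FF_q$-algebra homomorphism). Hence $\mathcal{E}_{w,m,\rho}=\lambda^{-2m}\lambda^{w}\lambda^{-1}\mathcal{E}_{w,m,\rho}=\lambda^{\,w-1-2m}\mathcal{E}_{w,m,\rho}$ for all $\lambda\in\FF_q^*$. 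If $w-1\not\equiv 2m\pmod{q-1}$ there is a $\lambda$ with $\lambda^{w-1-2m}\neq 1$, forcing $\mathcal{E}_{w,m,\rho}=0$.

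\textbf{Parts (2) and (3).} Here the main work is the cusp expansion. Following the computation in \cite[\S2.4, Proposition 22]{Pe}, one isolates in $\mathcal{E}_{w,m,\rho}=\sum_{\delta\in H\backslash\Gamma}f_\delta$ the contribution of the cosets that dominate as $u(z)\to 0$; the hypothesis $w\ge (q+1)m+1$ is exactly what guarantees convergence termwise of the relevant subsums and that the leading coset (coming, up to $H$, from the lower-triangular unipotent together with $\sqm{0}{1}{1}{0}$) contributes a nonzero term, with leading $u$-coefficient a matrix built from $\chi_t$ applied to a nonzero row of $\rho_\sigma\!\left(\sqm{0}{-1}{1}{0}\right)$, i.e.\ $(\sigma(-1),0)=(-I_d,0)$ or a translate thereof. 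Since $\rho_\sigma(\delta)_L$ ranges, as $\delta$ runs over a set of coset representatives, over matrices $(\sigma(c),\sigma(d))$ with $\gcd(c,d)=1$, the span of these leading terms is all of $\operatorname{Mat}_{d\times d}(\boldsymbol K)$-worth of directions; more precisely one shows the $d$ rows of the leading $u$-coefficient are $\boldsymbol K$-linearly independent, giving rank exactly $d$. For part (3) one observes that a $\mathbb{K}$-linear relation among the entries of a fixed row of $\mathcal{E}_{w,m,\rho}$ is preserved, after twisting by $\rho_\sigma(\gamma)^{-1}$, under the $\Gamma$-action; by irreducibility of $\rho_\sigma$ (Lemma \ref{lemma2}, using faithfulness and irreducibility of $\sigma$) the $\boldsymbol K$-span of the $\Gamma$-translates of any nonzero row is all of $\boldsymbol K^N$, so a nontrivial $\mathbb{K}$-relation would propagate to a $\mathbb{K}$-relation among all $N$ coordinate functions; a separate argument on the $\tau$-difference equations satisfied by the entries (they are, up to the factor of automorphy, built from the $\omega$-values of Lemma \ref{equationtau} and their ``Frobenius twists'' under $\chi_t$), combined with the separability of the characteristic polynomial of $\vartheta$ (which makes the eigenvalue-twists $\chi_{t}$-conjugate-distinct), shows such a total relation cannot hold. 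Equivalently, one compares leading $u$-orders: the entries in a single row have pairwise distinct orders of vanishing at the cusp in the $t$-adic sense, precluding any $\mathbb K$-linear dependence.

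\textbf{Main obstacle.} The delicate point is part (3): turning the module-theoretic irreducibility of $\rho_\sigma$ over $\boldsymbol K$ into genuine $\mathbb{K}$-linear independence of the holomorphic entries, since $\mathbb{K}$ is much larger than $\boldsymbol K$ and a priori relations with coefficients in $\mathbb{K}$ need not be $\Gamma$-equivariant in an obvious way. The resolution is to descend such a relation: the entries satisfy an explicit system of $\tau$-semilinear functional equations, and the space of solutions of that system is finite-dimensional over the field of $\tau$-invariants $\mathbb{K}^{\tau=1}=\boldsymbol K$ (by the Mittag–Leffler/descent argument already used in Lemma \ref{equationtau}); so any $\mathbb{K}$-relation is forced to be a $\boldsymbol K$-relation, and separability of $\chi$ of $\vartheta$ rules the latter out. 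Making the bookkeeping of cosets and $u$-orders precise — i.e.\ identifying which coset gives the dominant term and checking the coefficient is nonzero under the hypothesis $w\ge (q+1)m+1$ — is the other place where care is needed, but it is a direct adaptation of \cite[Proposition 22]{Pe}.
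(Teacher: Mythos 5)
Your part (1) is fine (and more self-contained than the paper, which deduces the vanishing from the reduction to the two-component series of \cite[Proposition 22]{Pe} rather than from scalar matrices directly). But for part (2) your proposal misses the one idea the paper's proof actually turns on, and what you put in its place does not work as sketched. The paper never redoes a cusp-expansion analysis for the matrix-valued series: it uses the separability hypothesis to get a basis $v_1,\ldots,v_d$ of eigenvectors of $\vartheta=\sigma(\theta)$ (hence common eigenvectors for all of $\sigma(A)$), forms the invertible matrix $M$ with columns $v_i\oplus 0$ and $0\oplus v_i$, and observes that multiplying $\mathcal{E}_{w,m,\rho}$ by $M$ converts each block of columns into the known scalar series $\mathcal{E}_{w,m,\chi_t}$ specialized at $t=\lambda_j$, times $v_j$. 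Non-vanishing and the exact rank $d$ then follow from \cite[Proposition 22]{Pe} plus the observation that a non-zero Tate-algebra element does not vanish when $t$ is specialized at a point of $\boldsymbol{K}^{ac}\setminus\FF_q^{ac}$ equipped with the trivial norm. Your substitute --- identify the dominant coset and check its leading $u$-coefficient is non-zero of rank $d$ --- is an assertion, not an argument: the leading coefficient is a sum over infinitely many cosets in which cancellation can occur, and saying that the individual terms $(\sigma(c),\sigma(d))$ span ``all of $\operatorname{Mat}_{d\times d}(\boldsymbol{K})$-worth of directions'' says nothing about the rank of their weighted sum. You also never use separability where it is genuinely needed, namely to diagonalize.

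For part (3), the first half of your argument (a $\mathbb{K}$-linear relation on a row gives, via the transformation law, a non-trivial $\rho(\Gamma)$-invariant kernel, which irreducibility --- Lemma \ref{lemma2} --- forces to be everything, contradicting non-vanishing) is precisely the paper's proof and already closes the argument; no ``separate argument'' is required. The extra machinery you then invoke is not available: the $\tau$-difference equation of Lemma \ref{equationtau} is satisfied by the $\omega$-values, not by the entries of Poincar\'e series, and neither the paper nor your sketch produces any such system of equations for $\mathcal{E}_{w,m,\rho}$; likewise the claim that the entries of a single row have pairwise distinct orders of vanishing at the cusp is unsubstantiated. So the ``resolution of the main obstacle'' you describe rests on structure that has not been established, while the actual resolution is the irreducibility argument you had already written down.
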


\begin{proof} The hypotheses on $\sigma$ imply that the matrix $\vartheta=\sigma(\theta)$ has distinct
conjugate eigenvalues $\lambda_1,\ldots,\lambda_d\in\boldsymbol{K}^{ac}$ none of which lies in $\FF_q^{ac}$.
We consider a corresponding basis $v_1,\ldots,v_d\in(\boldsymbol{K}^{ac})^d$ of eigenvectors
(considered as column matrices)
which are then common eigenvectors for all the elements of the image of $\sigma$.
In $(\boldsymbol{K}^{ac})^{2d}=(\boldsymbol{K}^{ac})^N$ we consider the basis 
$w_1,\ldots,w_{2d}$ defined by $w_i=v_i\oplus0$  and $w_{d+i}=0\oplus v_i$ for $i=1,\ldots,d$.
We also denote by $M\in\GL_N(\boldsymbol{K}^{ac})$ the matrix whose columns are the $w_i$'s
for $i=1,\ldots,2d$. then, for $\delta=\sqm{a}{b}{c}{d}\in\Gamma$, we have 
$$\rho(\delta)_LM=\delta(t)*(v_1,\ldots,v_d):=(c(\lambda_1)v_1,\ldots,c(\lambda_d)v_d,d(\lambda_1)v_1,\ldots,d(\lambda_d)v_d)\in\operatorname{Mat}_{d\times 2d}(\boldsymbol{K}^{2d}).$$ 
Hence we have, with the same significance of the product $*$ extended linearly, that 
$$M\cdot \mathcal{E}_{w,m,\rho}=\mathcal{E}_{w,m,\chi_t}*(v_1,\ldots,v_d),$$
where $\mathcal{E}_{w,m,\chi_t}:\Omega\rightarrow\operatorname{Mat}_{1\times 2}(\mathbb{K})$
is the function defined by
$$\mathcal{E}_{w,m,\chi_t}(z)=\sum_{\delta=\sqm{a}{b}{c}{d}\in H\backslash \Gamma}
\mu_{w,m}(\delta,z)^{-1}u^m(\delta(z))(c(t),d(t)).$$
This matrix function is the deformation of vectorial Poincar\'e series $\mathcal{E}_{w,m}(z,t)$
considered in \cite[Proposition 22]{Pe} and we know the following:
\begin{itemize}
\item If $w-1\not\equiv 2m\pmod{q-1}$, then $\mathcal{E}_{w,m,\chi_t}$ is identically zero. 
\item If  $w-1\equiv 2m\pmod{q-1}$ and $w\geq (q+1)m+1$, then all the entries of $\mathcal{E}_{w,m,\chi_t}$ are non-zero.
\end{itemize}
We now observe that 
if $C$ is a complete field containing $A$ and if $f(t)=\sum_{i\geq 0}f_it^i\in C[[t]]$ is a non-zero formal power series,
with $f_i\rightarrow0$ for $i\rightarrow\infty$  (an element of the Tate algebra of formal series with coefficients in $C$ in the variable $t$)
then, for $\lambda\in\boldsymbol{K}^{ac}\setminus\FF_q^{ac}$,
we have that $f(\lambda)=\sum_{i\geq 0}f_i\lambda^i$ converges in the complete field 
$\mathbb{K}^\sharp:=\widehat{\operatorname{Frac}(C\otimes_{\FF_q}\boldsymbol{K}^{ac})}$ (with $\boldsymbol{K}^{ac}$ carrying the trivial norm)
to a non-zero element.

Since $M$ is invertible, $\mathcal{E}_{w,m,\rho}$ is identically zero if and only if 
$\mathcal{E}_{w,m,\chi_t}$ is identically zero (we have supposed that $\vartheta$ has no eigenvalues in $\FF_q^{ac}$) and in the case of non-vanishing, the rank is maximal equal to $d$.
Properties 1) and 2) of our proposition hence follow from Proposition 22 of \cite{Pe}.

It remains to show the part 3); this follows from Lemma \ref{lemma2}.
Indeed, assuming that $w-1\equiv 2m\pmod{q-1}$ and $w\geq (q+1)m+1$,
let $i$ be an index between $1$ and $d$; we know that it is non-zero.
Let us assume, by contradiction, that the entries of $\mathcal{E}$ are linearly dependent; 
then, the vector space $V$ whose elements are the vectors $v\in (\mathbb{K}^\sharp)^N$ such that $\mathcal{E}(z)\cdot v=0$ for all $z\in\Omega$, is non-trivial.

Let $v$ be in $V$.
For all $\gamma\in\Gamma$, we have $$0=\mathcal{E}(\gamma(z))\cdot v=
\det(\gamma)^{-m}J_\gamma(z)^w\mathcal{E}(z)\cdot\rho(\gamma)^{-1}\cdot v.$$
This means that $\rho(\gamma)^{-1}\cdot v\in V$ so that $\rho(\gamma)(V)\subset V$
for all $\gamma\in \Gamma$. Since $\rho$ is irreducible, we thus have that $V= 
(\mathbb{K}^\sharp)^N$ but $\mathcal{E}$ is non-zero, whence a contradiction.
\end{proof}

\subsection{Vectorial Poincar\'e series associated to the representations $\rho^{\irrbis}_{\underline{t},\underline{l}}$}

We now consider the settings of \S \ref{tensorprod} and we return to independent variables $\underline{t}_s$ and to the field $\boldsymbol{K}_s=\FF_q(\underline{t}_s)$. We also consider $\underline{l}=(l_1,\ldots,l_s)$ an $s$-tuple 
with entries in $\ZZ$ which are $\geq 1$ and we note that 
the representation $\rho=\rho^{\irrbis}_{\underline{t},\underline{l}}$ of Theorem \ref{corollaryt1ts}
is normal to the depth $L=1$.
\begin{Proposition}
We have the following properties, for $w>0$ and $m\geq0$, and with $\rho$ the above considered representation.
\begin{enumerate}
\item The function $$\mathcal{E}_{w,m,\rho}:\Omega\rightarrow\operatorname{Mat}_{1\times N}(\mathbb{K}),$$ with $N=\phi_p(l_1)\cdots\phi_p(l_s)$ is well defined, holomorphic, tempered,
and satisfies
$$\mathcal{E}_{w,m,\rho}(\gamma(z))=\det(\gamma)^{-m}J_\gamma(z)^w\mathcal{E}_{w,m,\rho}(z)\cdot\rho(\gamma)^{-1},\quad \gamma\in\Gamma.$$
\item If $w':=w-l_1-\cdots-l_s\not\equiv 2m+1\pmod{q-1}$, then $\mathcal{E}_{w,m,\rho}\equiv0$.
\item With $w'$ defined as above, if $w'\equiv 2m+1$ and $w'\geq(q+1)m+1$, then $\mathcal{E}_{w,m,\rho}\neq0$.
\item If $m=0$ and $w'\equiv1\pmod{q-1}$, then $\mathcal{E}_{w,m,\rho}\neq0.$
\item In all cases in which $\mathcal{E}_{w,m,\rho}\neq0$, its entries are linearly independent over 
$\mathbb{K}$.
\end{enumerate}
\end{Proposition}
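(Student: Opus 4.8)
The plan is to establish the five items largely by reducing each one to a statement about the scalar-valued deformed Poincar\'e series $\mathcal E_{w,m,\chi_t}$ of \cite[Proposition 22]{Pe} (and its multivariable analogue), exactly as in the proof of Proposition \ref{proposition1}, while replacing the appeal to Lemma \ref{lemma2} by an appeal to Theorem \ref{corollaryt1ts}. First I would verify item (1): since $\rho=\rho^{\irrbis}_{\underline t,\underline l}$ is a well-defined representation $\Gamma\to\GL_N(\boldsymbol K_s)$ that is normal to the depth $L=1$ (as noted just before the statement), the general construction recalled above — the convergence of $\mathcal E_{w,m,\rho}=\sum_{\delta\in H\backslash\Gamma}f_\delta$ to a holomorphic, tempered function satisfying the quasi-equivariance relation — applies verbatim, so (1) is immediate from the Proposition preceding \S\ref{poincare}.

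For items (2), (3) and (4), the idea is to pin down the transformation behaviour of each entry of $\mathcal E_{w,m,\rho}$ under the scalar subgroup $\left(\begin{smallmatrix}\lambda&0\\0&\lambda\end{smallmatrix}\right)$ and under $H$, and to match the entries against the known non-vanishing results for the one-variable Poincar\'e series. Concretely, $\rho_{t_i,l_i}^\irr$ is built out of symmetric powers $\rho_{t_i,l_{i,j}}^{(j)}$ via $\rho_l^\star$; a monomial basis vector of the underlying space transforms under $\det$ by a power congruent to $-(l_1+\cdots+l_s)$ mod $q-1$ once one passes through the $\chi_{t_i}$ and the Frobenius twists (the Frobenius twists do not change the exponent mod $q-1$, and $\det\rho_{r}(\gamma)$ contributes $\det(\gamma)^{\binom{r+1}{2}}$ on $\operatorname{Sym}^r$ — but on the \emph{irreducible} piece $\rho_l^\star$ the relevant character is $\det^{l}$, as one checks from the explicit $\mathcal D_l$-description). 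This determines, via the equivariance relation in (1) specialised to scalar matrices, the precise congruence class $w'\equiv 2m+1\pmod{q-1}$ forcing $\mathcal E_{w,m,\rho}\equiv 0$ otherwise; this gives (2). For (3) and (4), I would specialise the variables $t_i\mapsto t^{k_i}$ with $0\le k_1\le\cdots\le k_s$ chosen with no carry-over in the $p$-expansion of $l=\sum l_iq^{k_i}$, exactly as in the proof of Theorem \ref{corollaryt1ts}: this evaluation sends $\rho^{\irrbis}_{\underline t,\underline l}$ to $\rho_{t,l}^\irr$ and sends a chosen entry of $\mathcal E_{w,m,\rho}$ to a corresponding entry of a one-variable Poincar\'e series attached to $\rho_{t,l}^\irr$, which in turn is built from the scalar $\mathcal E_{w,m,\chi_t}$-type series; the non-vanishing statements of \cite{Pe} (the case $w-1\equiv 2m$, $w\ge(q+1)m+1$, and the case $m=0$, $w\equiv1\bmod q-1$) then force non-vanishing before specialisation, since a power series in the $t_i$'s that is non-zero after such an evaluation was non-zero to begin with. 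The bookkeeping of which weight shift $w\mapsto w'=w-l_1-\cdots-l_s$ occurs is precisely the shift coming from the $\operatorname{Sym}^{l_i}$-factors, and I expect this is where most of the routine-but-delicate computation lives.

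Item (5) is the analogue of part (3) of Proposition \ref{proposition1}, and its proof is formally identical once irreducibility of $\rho$ is available. Suppose $\mathcal E=\mathcal E_{w,m,\rho}\neq 0$ but its entries are $\mathbb K$-linearly dependent; then $V=\{v\in(\mathbb K^\sharp)^N : \mathcal E(z)\cdot v=0\ \forall z\in\Omega\}$ is a non-zero subspace (here $\mathbb K^\sharp$ is the completed fraction field of $\mathbb K\otimes_{\FF_q}\boldsymbol K_s^{ac}$, exactly as in the proof of Proposition \ref{proposition1}, accommodating the several variables $t_1,\dots,t_s$). Applying the equivariance relation in (1) at an arbitrary $\gamma\in\Gamma$ gives $0=\mathcal E(\gamma(z))\cdot v=\det(\gamma)^{-m}J_\gamma(z)^w\,\mathcal E(z)\cdot\rho(\gamma)^{-1}v$, so $\rho(\gamma)^{-1}v\in V$, i.e.\ $V$ is $\rho$-stable. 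By Theorem \ref{corollaryt1ts}, $\rho=\rho^{\irrbis}_{\underline t,\underline l}$ is irreducible over $\boldsymbol K_s$, hence remains irreducible over the field $\mathbb K^\sharp$ after the usual scalar-extension argument, so $V=(\mathbb K^\sharp)^N$, contradicting $\mathcal E\neq 0$. The main obstacle I anticipate is not any of these arguments individually but the precise determinant/weight bookkeeping in (2)–(4): one must check carefully that the irreducible constituent $\rho_l^\star$ carries the determinant-character $\det^{l}$ (so that the weight shift is exactly $l_1+\cdots+l_s$ and the parity condition is $w'\equiv 2m+1$, not some other class), and that the no-carry specialisation $t_i\mapsto t^{k_i}$ is compatible with the factor of automorphy in the way needed to transport the non-vanishing statements of \cite{Pe} entry-by-entry.
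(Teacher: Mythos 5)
Your items (1), (2) and (5) are handled correctly and essentially as in the paper: (1) and (2) are routine variants of Proposition \ref{proposition1}, and for (5) you rightly replace the appeal to Lemma \ref{lemma2} by Theorem \ref{corollaryt1ts}. The genuine gap is in your treatment of the non-vanishing statements (3) and (4). The specialization $t_i\mapsto t^{k_i}$ with no carry-over does send $\rho^{\irrbis}_{\underline{t},\underline{l}}$ to $\rho^{\irr}_{t,l}$ with $l=\sum_i l_iq^{k_i}$, but it sends $\mathcal{E}_{w,m,\rho}$ to the one-variable Poincar\'e series attached to $\rho^{\irr}_{t,l}$, which is a vector of length $\phi_p(l)>2$ in general; this is \emph{not} ``built from'' the two-dimensional series $\mathcal{E}_{w,m,\chi_t}$ of \cite[Proposition 22]{Pe}, because the sum over the cosets $H\backslash\Gamma$ does not commute with taking symmetric powers. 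So after your specialization you face a non-vanishing problem of exactly the same nature as the one you started with, and no result of \cite{Pe} applies to it. Worse, if you then tried to contract against symmetric powers of $\binom{z}{1}$ and set $t=\theta$, the resulting scalar Poincar\'e series would have weight $w-l$ with $l=\sum_i l_iq^{k_i}$ enormous, hence negative, outside the range of any non-vanishing theorem.

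The paper's route for (3) avoids this: one contracts $\mathcal{E}_{w,m,\rho}$ directly (without first specializing the $t_i$'s among themselves) against the explicit holomorphic vector $F_{\underline{l}}(z)$ built as a tensor product of the $\operatorname{Sym}^{l_{i,j}}(F^{(j)})$ with $F(z)=\binom{z}{1}$, and then sets every $t_i=\theta$; each factor $(c(t_i),d(t_i))\cdot\binom{z}{1}$ becomes $cz+d=J_\delta(z)$, so the contraction collapses to the classical scalar Poincar\'e series $P_{w',m}$ of weight exactly $w'=w-l_1-\cdots-l_s$, which is non-zero by \cite[Proposition 10.5.2]{Ge}; non-vanishing of the contraction then forces $\mathcal{E}_{w,m,\rho}\neq0$. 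Finally, (4) is not a corollary of (3): when $w'\le 0$ (which happens as soon as $l_1+\cdots+l_s\ge w$) Gekeler's result says nothing, and the paper instead computes the limit of $\mathcal{E}_{w,0,\rho}$ at the cusp directly, finding $(0,\ldots,0,1)\neq0$. Your proposal treats (4) by the same (inapplicable) specialization as (3) and so does not cover this range.
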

\begin{proof}
The first two properties and the last one are simple variants of the corresponding parts of Proposition \ref{proposition1}. For the third property, we consider the matrix function
$F_{\underline{l}}:\Omega\rightarrow\operatorname{Mat}_{N\times 1}(\CC_\infty)$ 
with $N=\phi_p(l_1)\cdots\phi_p(l_s)$, defined by
$$F_{\underline{l}}(z)=\operatorname{Sym}^{l_{1,0}}(F)\otimes\cdots\otimes\operatorname{Sym}^{l_{1,r_1}}(F^{(r_1)})\otimes\cdots\otimes
\operatorname{Sym}^{l_{s,0}}(F)\otimes\cdots\otimes\operatorname{Sym}^{l_{s,r_s}}(F^{(r_s)}),$$
with $F(z)=\binom{z}{1}$ and where we have used the expansions in base $p$ of $l_1,\ldots,l_s$:
$l_i=l_{i,0}+l_{i,1}p+\cdots+l_{i,r_i}p^{r_i}$ with $r_i\neq 0$ for $i=1,\ldots,s$.
Then, as in \cite{Pe}, we note that 
$$(\mathcal{E}_{w,m,\rho}\cdot F_{\underline{l}})_{t_i=\theta}=P_{w',m}$$
the Poincar\'e series of weight $w'$ and type $m$ so that we can conclude with 
\cite[Proposition 10.5.2]{Ge}. 
The property 3) is not enough to show the property 4), but we can proceed more directly by noticing
that in this case,
$$\mathcal{E}_{w,0,\rho}=\sum_{\delta\in H\backslash\Gamma}J_\gamma^{-w}\rho(\gamma)_1.$$
Hence, if we suppose that $u(z)\rightarrow\infty$, then $\mathcal{E}_{w,0,\rho}\rightarrow(0,\ldots,0,1)$.\end{proof}
The transposed of the matrix functions $\mathcal{E}_{w,m,\rho}$ are thus vectorial modular forms of weight $w$, type $m$ 
associated to the representations ${}^t\rho$ in the sense of \cite{Pe}.
\subsubsection{Eisenstein series}\label{eisensteinseries}

We consider $\FF_q$-algebra representations $\sigma_1,\ldots,\sigma_s:A\rightarrow\operatorname{Mat}_{d\times d}(\boldsymbol{K})$. Let $\sigma$ be the semi-character $\sigma_1\cdots\sigma_s$.
We set:
$$\mathcal{G}_{w,\sigma}(z)=\sideset{}{'}\sum_{(a,b)\in A^2}(az+b)^{-w}(\sigma_1(a),\sigma_1(b))\otimes\cdots\otimes(\sigma_s(a),\sigma_s(b))$$ (the dash $'$ denotes a sum which avoids the couple $(0,0)$).
This defines a holomorphic function
$$\mathcal{G}_{w,\sigma}:\Omega\rightarrow\operatorname{Mat}_{d^s\times 2d^s}(\mathbb{K}).$$ Let, on the other side, $\rho$ be the representation
$\rho:\Gamma\rightarrow\operatorname{Mat}_{2d^s\times 2d^s}(\boldsymbol{K})$ defined by 
$\rho=\rho_{\sigma_1}\otimes\cdots\otimes\rho_{\sigma_s}$. The following lemma is easy to verify.
\begin{Lemma}
We have the identity $\mathcal{G}_{w,\sigma}=L_\sigma(w)\mathcal{E}_{w,0,\rho}.$
\end{Lemma}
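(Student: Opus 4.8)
The plan is to unwind both sides in terms of the underlying ``matrix coefficient'' functions attached to $\chi_{t_i}$ and the Eisenstein expansion, and then to recognize the factorization $\mathcal{G}_{w,\sigma}=L_\sigma(w)\mathcal{E}_{w,0,\rho}$ as the matrix analogue of the classical identity expressing an Eisenstein series as an $L$-value times a Poincar\'e-type series. First I would write $\mathcal{E}_{w,0,\rho}$ explicitly: since $\rho=\rho_{\sigma_1}\otimes\cdots\otimes\rho_{\sigma_s}$ is normal to the depth $L=d^s$ (each $\rho_{\sigma_i}$ is normal to depth $d$, and tensor products of normal representations are normal to the product depth, the bottom-right block of a Kronecker product of block-lower-unitriangular-on-$H$ matrices being again of that shape), we have for $\delta=\sqm{a}{b}{c}{d}$ that $\rho(\delta)_L=(\sigma_1(c),\sigma_1(d))\otimes\cdots\otimes(\sigma_s(c),\sigma_s(d))$ up to the appropriate identification of $\operatorname{Mat}_{d^s\times 2d^s}$ with the last $d^s$ rows. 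Hence, using $\mu_{w,0}(\delta,z)=J_\delta(z)^w$ and $u^0=1$,
\[
\mathcal{E}_{w,0,\rho}(z)=\sum_{\delta\in H\backslash\Gamma}J_\delta(z)^{-w}(\sigma_1(c),\sigma_1(d))\otimes\cdots\otimes(\sigma_s(c),\sigma_s(d)).
\]

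Next I would parametrize the sum. The coset space $H\backslash\Gamma$ with $H=\{\sqm{*}{*}{0}{1}\}$ is in bijection with the set of bottom rows $(c,d)$ of matrices in $\Gamma$; for $\Gamma=\GL_2(A)$ these are exactly the pairs $(c,d)\in A^2$ that are ``unimodular'', i.e.\ generate the unit ideal (equivalently, extend to an element of $\GL_2(A)$), taken up to the $\FF_q^\times$-scaling coming from the diagonal of $H$. On the other hand the primed sum $\sum'_{(a,b)\in A^2}$ in $\mathcal{G}_{w,\sigma}$ runs over \emph{all} nonzero pairs. The standard device (used in \cite{Pe} for the scalar case) is to factor any nonzero $(a,b)$ uniquely as $a=e\cdot c$, $b=e\cdot d$ with $e\in A^+$ and $(c,d)$ a primitive (unimodular) pair normalized so that its ``content'' is monic; then $(az+b)^{-w}=e^{-w}(cz+d)^{-w}$ and, because each $\sigma_i$ is an $\FF_q$-algebra homomorphism, $\sigma_i(a)=\sigma_i(e)\sigma_i(c)$ and $\sigma_i(b)=\sigma_i(e)\sigma_i(d)$, so the rank-one factor $(\sigma_i(a),\sigma_i(b))$ becomes $\sigma_i(e)\cdot(\sigma_i(c),\sigma_i(d))$. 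The scalars $\sigma_i(e)$ pull out of each tensor slot, and since the $\sigma_i(\theta)$ pairwise commute, $\sigma_1(e)\otimes\cdots\otimes\sigma_s(e)$ acts as the single operator $\sigma(e)=\sigma_1(e)\cdots\sigma_s(e)$ (interpreting the tensor of the scalar-extension operators on $\operatorname{Mat}_{2d^s\times2d^s}$); collecting the $e\in A^+$ sum gives precisely $\sum_{e\in A^+}e^{-w}\sigma(e)=L_\sigma(w)$ by the series definition in \S\ref{Lvalues}, while the remaining sum over primitive normalized $(c,d)$ is exactly $\mathcal{E}_{w,0,\rho}(z)$. One must only check the bookkeeping of the $\FF_q^\times$-action is consistent on both sides, which it is because $w\equiv0$ contributions and the choice of monic representatives match the normalization of $H\backslash\Gamma$; this is where I would be slightly careful, but it is routine and identical to the scalar computation in \cite{Pe}.

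The main (and only real) obstacle is making the identification $\operatorname{Mat}_{2d^s\times2d^s}\cong(\text{tensor of }2d\text{-spaces})$ compatible on both sides, i.e.\ checking that the ``last $d^s$ rows'' operation $M\mapsto M_L$ intertwines correctly with the Kronecker product so that $\rho(\delta)_L$ really is $\bigotimes_i(\sigma_i(c),\sigma_i(d))$ and that left-multiplication by $L_\sigma(w)$ on $\operatorname{Mat}_{2d^s\times2d^s}$ restricts to the claimed left-multiplication on $\operatorname{Mat}_{d^s\times2d^s}$. Since $L_\sigma(w)$ is a convergent limit of polynomials in the (commuting) operators $\sigma_i(P)$, all these manipulations are continuous and it suffices to verify them on each finite partial product, which is elementary linear algebra. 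Convergence of the rearranged double series is inherited from the (already known) convergence of $\mathcal{G}_{w,\sigma}$ and of the scalar Poincar\'e/Eisenstein series in \cite{Pe}, together with the ultrametric fact that unconditional convergence permits the regrouping. Once these compatibilities are in place the identity $\mathcal{G}_{w,\sigma}=L_\sigma(w)\mathcal{E}_{w,0,\rho}$ drops out, as asserted.
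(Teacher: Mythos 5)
Your verification is the standard one the paper has in mind (the paper offers no written proof, calling the lemma ``easy to verify''): factor each nonzero $(a,b)$ as $e\cdot(c,d)$ with $e\in A^+$ the monic content and $(c,d)$ unimodular, use that each $\sigma_i$ is an algebra homomorphism together with the mixed-product rule for Kronecker products, and identify the unimodular pairs with $H\backslash\Gamma$; the row-ordering compatibility you flag is indeed only a choice of basis adapted to the block decomposition. Two small corrections. First, since $H=\{\sqm{*}{*}{0}{1}\}$ fixes the bottom row exactly, $H\backslash\Gamma$ is in bijection with \emph{all} unimodular pairs, with no $\FF_q^\times$-quotient, so the ``bookkeeping of the $\FF_q^\times$-action'' you worry about is vacuous rather than something to be checked. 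Second, for $d>1$ the operator you pull out of the sum over $e$ is $\sum_{e\in A^+}e^{-w}\,\sigma_1(e)\otimes\cdots\otimes\sigma_s(e)$, a $d^s\times d^s$ matrix, which is the $L$-value of the semi-character $a\mapsto\sigma_1(a)\otimes\cdots\otimes\sigma_s(a)$ and \emph{not} literally the $d\times d$ matrix $\sum_{e}e^{-w}\sigma_1(e)\cdots\sigma_s(e)$ of \S\ref{Lvalues}; the stated identity only typechecks under the former reading (they coincide when $d=1$, the case of the $\chi_{t_i}$), so you should say this explicitly instead of asserting that the tensor ``acts as the single operator $\sigma(e)$''.
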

The matrix $L_\sigma(w)$ is the $L$-value associated to the semi-character $\sigma$ as defined in \S \ref{Lvalues}. This and Proposition \ref{Taelmanunits} suggest that the Eisenstein series $\mathcal{G}_{w,\sigma}$ could be also related to Taelman units. Of course, this is quite speculative, because at the moment, 
we do not have at our disposal any kind of metric over the spaces of vectorial modular forms that we consider,
allowing us to define an appropriate notion of unit group of Taelman in this setting. However, this seems to 
suggest the following conjecture.

\begin{Conjecture}
The $\mathbb{K}$-module of the vectorial modular forms of weight one and type $0$ associated to a representation $\rho_{\sigma_1}\otimes\cdots\otimes\rho_{\sigma_s}$
with $\sigma_1,\ldots,\sigma_s$ algebra representations $A\rightarrow\operatorname{Mat}_{d\times d}(\boldsymbol{K})$ is of rank one, generated by the Eisenstein series $\mathcal{G}_{w,\sigma}$, where
$\sigma=\sigma_1\cdots\sigma_s$.
\end{Conjecture}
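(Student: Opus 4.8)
The plan is to split the asserted statement into the two inequalities on the rank. The inequality ``rank $\geq 1$'' is the easy one: it amounts to checking that $\mathcal{G}_{1,\sigma}$ is a nonzero element of the module. The inequality ``rank $\leq 1$'' is the substantial part and, as the discussion just before the conjecture already concedes, will most likely require foundational input that is not yet available. Throughout, one should assume that $w$, the type $0$ and $s$ satisfy whatever congruence modulo $q-1$ is needed for the module to be nonzero (compare the vanishing statements in Proposition \ref{proposition1} and in the Proposition on the series $\mathcal{E}_{w,m,\rho}$ for $\rho=\rho^{\irrbis}_{\underline{t},\underline{l}}$); without such a condition the module is $0$ and the statement must be read accordingly.

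For ``rank $\geq 1$'', recall the last Lemma of \S\ref{eisensteinseries}: $\mathcal{G}_{1,\sigma}=L_\sigma(1)\,\mathcal{E}_{1,0,\rho}$ with $\rho=\rho_{\sigma_1}\otimes\cdots\otimes\rho_{\sigma_s}$. The factor $L_\sigma(1)$ is invertible (it is a Taelman unit by Proposition \ref{Taelmanunits}), so it is enough to show $\mathcal{E}_{1,0,\rho}\neq 0$. For this I would diagonalise each $\sigma_i(\theta)$ over $\boldsymbol{K}_s^{ac}$ and conjugate $\mathcal{E}_{1,0,\rho}$ by the matrix built from the resulting common eigenvectors, exactly as in the proof of Proposition \ref{proposition1}; after this invertible change of basis over $\mathbb{K}^{\sharp}$, $\mathcal{E}_{1,0,\rho}$ becomes a tensor product of $s$ copies of the weight-one vectorial Eisenstein series $\mathcal{E}_{1,0,\chi_{t}}$ of \cite[Proposition 22]{Pe}, each of which is nonzero, and a tensor product of nonzero elements of Tate-type algebras over $\mathbb{K}^{\sharp}$ is nonzero by the specialisation argument (``$f(\lambda)\neq 0$'') used in that same proof. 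Since $\mathcal{G}_{1,\sigma}$ is already known to be holomorphic, tempered and modular of weight $1$, type $0$ for ${}^{t}\rho$, this produces a nonzero element of the module.

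For ``rank $\leq 1$'' I would aim to reduce to the one-variable, $d=1$ case in two steps. First, using the separability that one may impose on the minimal polynomials of the $\sigma_i(\theta)$, conjugate by the eigenvector matrix as above; a general weight-one, type-$0$ vectorial form for ${}^{t}\rho$ is thereby carried into a direct sum of forms attached to tensor products of the $2$-dimensional blocks $\rho_{\chi_{\mu}}$, with $\mu$ ranging over the eigenvalues of the various $\sigma_i(\theta)$, i.e.\ essentially to the representations $\rho^{\irrbis}_{\underline{t},\underline{l}}$ of \S\ref{tensorprod} with all $l_i=1$. Second, pass from $s$ independent variables to a single one via the carry-over specialisation $t_i\mapsto t^{k_i}$ of the proof of Theorem \ref{corollaryt1ts}; one must check that this specialisation maps the module of forms into the module for a single $\rho_{\chi_t}$ without killing it, which should follow from the Zariski density of the localised polynomial ring together with holomorphy and temperedness. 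One is then reduced to showing that the $\mathbb{K}$-module of weight-one, type-$0$ vectorial modular forms attached to $\rho_{\chi_t}$ is free of rank one, generated by $\mathcal{E}_{1,0,\chi_t}$. If the reduction proves delicate, an alternative is a direct Wronskian-style argument: were the rank $\geq 2$, choose two $\mathbb{K}_s$-independent forms, form a suitable minor of their matrix of entries, and use the irreducibility of ${}^{t}\rho$ (Theorem \ref{corollaryt1ts}, via Lemma \ref{lemma2}) together with the transformation law to reach a contradiction, much as in part 3) of Proposition \ref{proposition1}.

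The main obstacle is precisely this base case, together with the faithfulness of the reduction to it. Showing the weight-one vectorial Eisenstein series is nonzero, and computing it, is done in \cite{Pe}; promoting ``nonzero'' to ``generates the whole module'' is a rigidity statement of a different nature. The natural route is to analyse the twisted $u$-expansion at the cusp: invariance under $H=\{\sqm{*}{*}{0}{1}\}$ makes the holonomy $\rho(\sqm{1}{a}{0}{1})$ act on the expansion, and holomorphy together with temperedness should pin down its pole-free part; but carrying this out uses the absence of nonzero scalar Drinfeld modular forms of small weight only in the scalar situation, whereas for $s>1$ the cusp datum lives in a higher-dimensional twisted space whose pole-free part must still be shown to be one-dimensional. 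The alternative, hinted at in the text, is first to build the analogue of Taelman's unit module for these vectorial forms---which requires a metric structure that is not yet available---and then to deduce rank one from an Anderson-type log-algebraicity theorem generalising \cite[Theorem 8.2]{APTR}, in the spirit of Proposition \ref{Taelmanunits}. In every approach the difficulty is the same: bounding the module from above, rather than exhibiting one nonzero element of it.
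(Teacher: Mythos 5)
This statement is a \emph{Conjecture} in the paper: the author explicitly offers no proof, and motivates it only by the speculative analogy with Taelman units, noting that even the metric structure needed to formulate a ``unit module'' of vectorial modular forms is not yet available. So there is no proof in the paper to compare yours against, and your proposal does not close the gap either --- as you yourself concede, the inequality ``rank $\leq 1$'' is the entire content of the conjecture and remains unproved. Your lower-bound argument (non-vanishing of $\mathcal{G}_{1,\sigma}$ via invertibility of $L_\sigma(1)$ and the eigenvector conjugation reducing $\mathcal{E}_{1,0,\rho}$ to tensor factors of the scalar case) is reasonable and consistent with the techniques of Proposition \ref{proposition1}, but it only shows the module is non-zero, which is not where the difficulty lies.

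Two of your proposed reduction steps for the upper bound deserve a concrete warning. First, the specialisation $t_i\mapsto t^{q^{k_i}}$ in the proof of Theorem \ref{corollaryt1ts} is used there only to transport \emph{irreducibility of a representation} from one variable back to several; applying it to \emph{modules of modular forms} is a different matter, since a vectorial form for the specialised representation need not arise as the specialisation of a form in $s$ variables, and the specialisation map on forms could a priori be neither injective nor surjective onto the one-variable module --- so ``rank one downstairs'' would not immediately give ``rank one upstairs''. Second, even the base case you reduce to (the module of weight-one, type-$0$ forms for $\rho_{\chi_t}$ is free of rank one generated by the Eisenstein series) is itself an open statement within the confines of this paper; nothing in Proposition \ref{proposition1}, which only establishes non-vanishing and linear independence of the entries of a single Poincar\'e series, bounds the number of independent forms. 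Your Wronskian-style alternative has the same defect: irreducibility of $\rho$ (Lemma \ref{lemma2}, Theorem \ref{corollaryt1ts}) rules out a common kernel for the entries of one form, but says nothing about two $\mathbb{K}$-linearly independent forms existing. In short, the proposal correctly diagnoses where the difficulty sits, but every route it sketches to the upper bound either begs the base case or relies on a transfer step that is not justified; the statement should be treated as the conjecture it is.
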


\section{Other representations of $\Gamma$}\label{pathologies}

Let $\boldsymbol{K}$ be a field containing a fixed base field $k$ of positive characteristic (e.g. $k=\FF_q$) and let us consider 
a group representation $\rho:G\rightarrow\GL_N(\boldsymbol{K})$. 
The {\em essential dimension} (over $k$)
of $\rho$ is the transcendence degree over $k$ of the field generated by the entries of the image
of $\rho$. If $G$ is finite, then the essential dimension of $\rho$ is zero. In this paper, we have studied several examples in the case $k=\FF_q$ and $G=\Gamma$.
For instance, the essential dimension of the tautological representation $\Gamma\rightarrow\GL_2(A)$
is one, and the essential dimension of a representation
$\rho^{\irrbis}_{\underline{t},\underline{l}}$ as in Theorem \ref{corollaryt1ts} is $s$, the number of variables in $\underline{t}_s$.

As a conclusion of the present note, we would like to point out that there are irreducible, finite dimensional
representations of $\GL_2(k[t])$ with infinite essential dimension.
Indeed, for any field $k$, a Theorem of Nagao (see \cite[Theorem 2]{Na} and Serre, \cite[II.1.6]{Se})
asserts that
\begin{equation}\label{amalgamated}
\GL_2(k[t])\cong\GL_2(k)*_{B(k)}B(k[t]),\end{equation}
where, for a commutative ring $R$, $B(R)$ denotes the group of upper triangular matrices with 
entries in $R$ with invertible determinant and where $*_{B(k)}$ stands for the amalgamated 
product along $B(k)$. Therefore, we have the following:
\begin{Proposition}
Any automorphism $\phi$ of $\GL_2(k)$ extends to a group isomorphism between
$\GL_2(k[t])$ and the subgroup $\Phi^\infty$ of $\GL_2(k[x_1,x_2,\ldots])$ generated by 
$\GL_2(k)$ and the matrices $\sqm{\lambda}{x_i}{0}{\mu}$, where $x_1,x_2,\ldots$
are independent indeterminates over $k$ and $\lambda,\mu\in k^\times$.
\end{Proposition}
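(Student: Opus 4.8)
The plan is to build the isomorphism from the Nagao decomposition (\ref{amalgamated}) by the universal property of the amalgamated product. First I would observe that the group $\Phi^\infty$ sits naturally as an amalgamated product as well: let $G_1=\GL_2(k)\subset\Phi^\infty$ be the subgroup of constant matrices, and let $G_2$ be the subgroup generated by $\GL_2(k)$ together with all the unipotent-up-to-diagonal matrices $\sqm{\lambda}{x_i}{0}{\mu}$. One checks that $G_2$ is itself isomorphic to $B(k[t])$: indeed $B(k[t])$ is generated by $B(k)$ and the elementary matrices $\sqm{1}{t^i}{0}{1}$ for $i\ge1$, subject only to the relations coming from conjugation of the additive group $t\,k[t]$ by $B(k)$, and the same presentation holds for $G_2$ with $t^i\mapsto x_i$, because the $x_i$ are algebraically independent over $k$ so no spurious additive relations appear. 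The amalgamation $G_1\cap G_2=B(k)$ matches the amalgamation $\GL_2(k)\cap B(k[t])=B(k)$ inside $\GL_2(k[t])$.

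Next I would assemble the two maps whose amalgam gives the desired isomorphism. On the $\GL_2(k)$ factor, use $\phi\colon\GL_2(k)\xrightarrow{\sim}\GL_2(k)=G_1$. On the $B(k[t])$ factor, use the isomorphism $B(k[t])\xrightarrow{\sim}G_2$ sending $\sqm{1}{t^i}{0}{1}\mapsto\sqm{1}{x_i}{0}{1}$ and restricting on $B(k)$ to $\phi|_{B(k)}$; such a map exists precisely because $\phi$ preserves $B(k)$ (every automorphism of $\GL_2(k)$ does, up to the inner/graph automorphisms, but in any case we are handed an automorphism $\phi$ of $\GL_2(k)$ and need only that it sends the amalgamated subgroup $B(k)$ into $G_1\cap G_2=\phi(B(k))$, which requires $\phi(B(k))=B(k)$; I would address this point — see below). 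These two maps agree on $B(k)$ by construction, so by the universal property of $*_{B(k)}$ they glue to a homomorphism $\GL_2(k[t])=\GL_2(k)*_{B(k)}B(k[t])\to\Phi^\infty$. It is surjective because its image contains $G_1$ and $G_2$, which generate $\Phi^\infty$; it is injective because it is an amalgam of two injections along the amalgamated subgroup, so it maps each reduced word to a reduced word in the target amalgam $G_1*_{B(k)}G_2=\Phi^\infty$.

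The main obstacle is the compatibility of $\phi$ with the amalgamated subgroup $B(k)$: for the gluing to work one needs $\phi(B(k))=B(k)$, whereas an arbitrary automorphism of $\GL_2(k)$ need only send $B(k)$ to some conjugate Borel. I would handle this by first composing $\phi$ with an inner automorphism of $\GL_2(k)$ — which does extend to an inner automorphism of $\GL_2(k[t])$, hence changes nothing essential — to arrange $\phi(B(k))=B(k)$; equivalently, one may simply define $\Phi^\infty$ using the Borel $\phi(B(k))$ in place of the upper-triangular one, which is conjugate to the stated $\Phi^\infty$ inside $\GL_2(k[x_1,x_2,\ldots])$ and hence isomorphic to it. A secondary technical point, which I expect to be routine, is verifying that $G_2$ really is a copy of $B(k[t])$ and not a proper quotient: this amounts to checking that the only relations among the $\sqm{1}{x_i}{0}{1}$ and $B(k)$ in $\GL_2(k[x_1,x_2,\ldots])$ are the semidirect-product relations $\sqm{\lambda}{0}{0}{\mu}\sqm{1}{x_i}{0}{1}\sqm{\lambda}{0}{0}{\mu}^{-1}=\sqm{1}{(\lambda/\mu)x_i}{0}{1}$, which is clear since the $x_i$ span a free $k$-module and the conjugation action is by scalars.
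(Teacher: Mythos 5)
Your route is the same as the paper's: the paper's entire proof is the one line that, by Nagao's decomposition (\ref{amalgamated}), the association $\sqm{\lambda}{t^i}{0}{\mu}\mapsto\sqm{\lambda}{x_i}{0}{\mu}$ extends via the universal property of the amalgamated product. You flesh this out, and in particular your discussion of the compatibility of $\phi$ with the amalgamated subgroup $B(k)$ (composing with an inner automorphism, or replacing the Borel by $\phi(B(k))$) addresses a real point that the paper passes over in silence; that part of your write-up is a genuine improvement in care.

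The one step you assert rather than prove is the identification $\Phi^\infty=G_1*_{B(k)}G_2$, on which your injectivity argument rests. Showing that $G_2\cong B(k[t])$ (your ``secondary technical point'') is not the issue; the issue is that the subgroup of $\GL_2(k[x_1,x_2,\ldots])$ generated by $G_1$ and $G_2$ could a priori satisfy extra relations mixing the two factors, i.e.\ the canonical surjection $G_1*_{B(k)}G_2\twoheadrightarrow\langle G_1,G_2\rangle$ could fail to be injective. Proving it is injective is exactly as hard as proving your map $\GL_2(k[t])\to\Phi^\infty$ is injective, so as written the argument begs the question at this point. The gap is easy to close: the ring homomorphism $k[x_1,x_2,\ldots]\to k[t]$ sending $x_i\mapsto t^i$ induces a group homomorphism $\Phi^\infty\to\GL_2(k[t])$ which is a left inverse (after undoing $\phi$ on the $\GL_2(k)$ factor) of the map you constructed, since the two composites agree on generators; a homomorphism with a left inverse is injective, and surjectivity you already have. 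With that retraction added, your proof is complete and is essentially the intended one.
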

\begin{proof}
By (\ref{amalgamated}), we see that the association $\sqm{\lambda}{t^i}{0}{\mu}\mapsto\sqm{\lambda}{x_i}{0}{\mu}$ extends to give the above group isomorphism.
\end{proof}
The above proposition exhibits representations $\GL_2(k[t])\rightarrow\GL_2(\boldsymbol{K}_\infty)$
where $\boldsymbol{K}_\infty=k(x_1,x_2,\ldots)$, which have infinite essential dimension over $\FF_q$.

\begin{Remark}{\em The group $\operatorname{SL}_2(\mathbb{Z})$ has uncountably many isomorphism classes of irreducible complex representations and their explicit classification is not yet understood. A similar 
question arises with the group $\GL_2(A)$ and its representations in a complete algebraically closed field 
of characteristic $p$. 
The complete classification for $\operatorname{SL}_2(\mathbb{Z})$ is however accessible if we impose an upper bound on the dimension.
In \cite{Tu}, Tuba and Wenzl obtained a complete classification of irreducible representations of 
the braid group $B_3$ of dimension $\leq 5$ yelding a similar result for irreducible complex representations
of  $\operatorname{SL}_2(\mathbb{Z})$; it turns out that these families algebraically depend 
on finitely many parameters (eigenvalues, characters etc.). It would be nice to have a similar result
for $\GL_2(A)$.
}\end{Remark}

\subsubsection*{Acknowledgement}

The author thanks  Gebhard B\"ockle, Mihran Papikian and the Referee, for useful hints and remarks that have contributed to improve and correct the paper.

\end{document}